\newcommand{\rzn}{\mathbb R^{n}}
\newcommand{\rz}{\mathbb R}
\newcommand{\rzm}{\mathbb R^{m}}
\newcommand{\nat}{\mathbb N_0}
\newcommand{\natu}{\mathbb N}
\newtheorem{thm}{Theorem}
\newtheorem{prop}[thm]{Proposition}
\newtheorem{lem}[thm]{Lemma}
\newtheorem{defi}[thm]{Definition}
\newtheorem{assu}[thm]{Assumption}
\newtheorem{cor}[thm]{Corollary}
\newtheorem{examp}[thm]{Example}
\newtheorem{remark}[thm]{Remark}
\journal{JMMA}
\begin{document}
	
	\begin{frontmatter}
		
		
		\title{The maximum principle for discrete-time control systems and applications
			to dynamic games}
		
		\author[rvt]{Alberto Dom\'inguez Corella\corref{cor1}}
		\ead{adominguez@math.cinvestav.mx}
		\author[rvt]{On\'esimo Hern\'andez-Lerma}
		\ead{ohernand@math.cinvestav.mx}

		\address[rvt]{Mathematics Department, CINVESTAV-IPN, A. Postal 14-740, Mexico City 07000, Mexico}
		
		\cortext[cor1]{Corresponding author}


		\begin{abstract}
			We study deterministic nonstationary discrete-time  optimal  control  problems  in both finite and  infinite  horizon. With the aid of Gateaux differentials, we prove a discrete-time maximum principle in analogy with the well-known continuous-time maximum principle. We show that this maximum principle, together with a transversality condition, is a  necessary condition for optimality; we also show that it is sufficient under additional hypotheses.  We use Gateaux differentials as a natural setting to derive first-order conditions. Additionally, we use the discrete-time maximum principle to derive the discrete-time Euler equation and to characterize Nash equilibria for discrete-time dynamic games.
		\end{abstract}
		
		\begin{keyword}
			
			
			Maximum principle, Pontryagin principle, Discrete-time, Control system, Optimal control
			
		\end{keyword}
		
	\end{frontmatter}
	
	\section{Introduction}
	In this paper, we study first-order conditions for optimality in  deterministic nonstationary discrete-time control systems. Our main purpose is to establish a discrete maximum principle, in analogy with the continuous-time case, and a transversality condition as a necessary and sufficient condition for optimality.
	The book by Blot and Hayek \cite{Blot} gives an account of the state of the art in the infinite horizon framework; however, there is not a single example in that book, which suggests that the results in \cite{Blot} are not easily applicable. In contrast, we illustrate our results with several examples and applications.
	
	The continuous-time maximum principle is well known in the literature; see \cite{Pont}; \cite{Flemming} and \cite{Clarke} for a thorough presentation of the theory; \cite{HistoryMP} for its history. We are interested in an analogous principle for discrete-time systems. In \cite[Section 3.2.3]{Blot}, Blot and Hayek prove a maximum principle for trajectories and state processes that are both bounded. Since their proof relies on the classical optimization techniques of functional analysis \cite{Clarke,Lue}, they obtain the existence of the adjoint variables in a non-constructive way. We work with arbitrary trajectories and give an explicit form of the adjoint variables.
	In \cite{Multipliers}, 	Aseev et al. developed  necessary conditions in the form of a maximum principle for weakly overtaking solutions based on ideas from the continuous-time setting. We work under similar assumptions.
	
	Here, we present a direct approach based on the use of Gateaux differentials as a natural setting to obtain the maximum principle for open-loop strategies. More explicitly, under a technical assumption, we calculate a Gateaux derivative of the performance index and use it  to obtain necessary conditions. In addition, we consider convexity-concavity assumptions to establish sufficient conditions. 
	
	We present several applications and extensions of our maximum principle. Namely, control problems with state constraints. For that purpose, we use Markov strategies and prove that a similar maximum principle holds. We briefly consider finite-horizon optimal control problems to show that it is a particular case of our model.
	We derive the Euler equation together with a new transversality condition as a particular case of our maximum principle. The Euler equation has been recently studied in \cite{EEDavid} for open-loop strategies. In contrast, we propose an approach with Markov strategies. As an extension of our work for optimal control problems, we consider discrete-time dynamic games in infinite horizon and use the maximum principle to characterize open-loop Nash equilibria.
	
	This paper is organized as follows. In Section 2, we introduce the optimal control model we are interested in and obtain the maximum principle and a transversality condition as a first-order optimality condition; we also present a couple of examples to illustrate our results. In Section 3, we consider related topics: Markov strategies, finite horizon and the Euler equation approach. Finally, in Section 4 we obtain a maximum principle for dynamic games.

	\section{The Maximum Principle}\label{MPSec}
	\subsection{The Optimal Control Model}\label{olmodel}
	
	In this subsection, we present the control model we will be dealing with, which concerns discrete-time  nonstationary  (or  time-varying) deterministic dynamic optimization problems in infinite horizon. Dynamic optimization problems are also known as optimal control problems. 
	
	As usual,  $\natu$ denotes the set $\left\lbrace 1,2,\dots \right\rbrace $ and $\nat$ denotes $\natu\cup\left\lbrace 0\right\rbrace .$
	
	Let $X\subset\rzn$ be the state space, and $U\subset\rzm$ the control set. Consider a sequence $\left\lbrace X_t\hspace{0.1cm}|\hspace{0.1cm} t\in\nat\right\rbrace$ of non-empty subsets of $X$, and  $\left\lbrace U_t\subset U\hspace{0.1cm}|\hspace{0.1cm} t\in\nat\right\rbrace $ a family of feasible control sets. For each $t\in\nat$, $x\in X_t$, and $u\in U_t$, we denote by $f_t(x,u)$ the corresponding state in $X_{t+1}$. Thus, given an initial state $x_0$, the state of the system evolves according to the equation
	\begin{align}\label{dynamic}
	x_{t+1}=f_t(x_t,u_t) \hspace{.5cm}\forall_{t\in\nat},
	\end{align}
	where, for each $t\in\nat$, $f_t:X_t\times U_t\to X_{t+1}$ is a given system function. We want to optimize the performance index
	\begin{align}\label{Performanceindex}
	\sum_{t=0}^{\infty}g_t(x_t,u_t),
	\end{align}
	\\
	where $g_t:X_t\times U_t\to\rz$ is a given function for each $t\in\nat$.

	A sequence $\psi=\left\lbrace u_t\right\rbrace $ is called an open-loop strategy, or simply a plan, whenever $u_t$ is in $ U_t$ for all $t\in\nat$. We denote the set of plans from $x_0$ as $\Psi$.
	Given a plan $\psi=\left\lbrace u_t \right\rbrace $, we denote by $\left\lbrace x_t^\psi \right\rbrace $ the sequence induced by $\psi$ in (\ref{dynamic}), i.e., 
	\begin{align*}
	&x_0^\psi=x_0\\
	&x_{t+1}^\psi=f_t(x_t^\psi,u_t) \hspace{0.3cm}\text{for $t=0,1,\dots$}.
	\end{align*}

	The Optimal Control Problem (OCP) we are concerned is to find a plan $\psi$, also called control policy, that maximizes the performance index (\ref{Performanceindex}) subject to (\ref{dynamic}). 
	
	In a compact form, a nonstationary OCP can be described by the triplet
	\begin{align}\label{tuple}
	(\Psi, \left\lbrace f_t\right\rbrace, \left\lbrace g_t\right\rbrace),
	\end{align} 
	with components as above. 
	
	For the OCP to be well-defined, the following assumption is supposed to hold throughout the remainder of this section.

	\begin{assu}\label{WellPosed}
		The triplet in (\ref{tuple}) satisfies the following for each $x_0\in X_0$:
		\begin{enumerate}
			\item[(a)] the set $\Psi$ is nonempty;
			
			\item[(b)] for each $\psi=(u_0,u_1,\dots)\in\Psi$, 
			\begin{align*}
			\sum_{t=0}^\infty g_t(x_t^\psi,u_t)<\infty;
			\end{align*}
			\item[(c)] there exists $\psi=(u_0,u_1,\dots)\in\Psi$ such that 
			\begin{align*}
			\sum_{t=0}^\infty g_t(x_t^\psi,u_t)>-\infty;
			\end{align*}
			\item[(d)] for each $t\in\nat$, $f_t$ and $g_t$ are differentiable in the interior of $X_t\times U_t$.
		\end{enumerate}
	\end{assu}
	For $x_0\in X_0$, define the OCP performance index (also known as objective function) $v:\Psi\to\mathbb R\cup\left\lbrace -\infty\right\rbrace $ by
	\begin{align}\label{PI}
	v(\psi)=\sum_{t=0}^\infty g_t(x_t^\psi,u_t).
	\end{align}
	Assumption \ref{WellPosed}(a)-(b) ensures that $v$ is well defined.
	
	For the triplet (\ref{WellPosed}) and $x_0\in X_0$, the OCP is to find $\hat\psi\in\Psi$ such that 
	\begin{align*}
	v(\hat{\psi})\ge v(\psi)
	\end{align*} 
	for all $\psi\in\Psi$. In such a case, we say that $\hat{\psi}$ is an optimal plan. The optimization problem makes sense by Assumption \ref{WellPosed}(c).

	\subsection{Necessary Conditions}\label{NECCON}

	In this subsection, we introduce the Maximum Principle (MP)  (\ref{MPX})-(\ref{MPY}) and the Transversality Condition (TC) (\ref{TC}) as necessary conditions for the existence of an optimal plan. 
	We suppose that the initial state $x_0\in X_0$ is fixed. Recall that Assumption \ref{WellPosed} holds. We will require the concept of Gateaux differential.
	
	\begin{defi}{\cite[pp.2-4]{Flemming},\cite[p.171]{Lue}}
		Let $\mathcal{X}$ be a linear space and $\mathcal{V}$ a subset of $\mathcal X$. Let $p\in\mathcal V$ and $q\in\mathcal X$.
		\begin{enumerate}
			\item[(a)]  We say that $p$ is an internal of $\mathcal V$  point in the direction $q$ if there exist a real number $\varepsilon_0>0$ such that 
			$
			p+\varepsilon q
			$ is in $\mathcal V$
			for all $\varepsilon\in(-\varepsilon_0,\varepsilon_0)$.
			\item[(b)]Suppose $p$ is an internal point of $\mathcal V$ in the direction $q$. Let $h :\mathcal V\to\rz$ be a function. If the derivative
			\begin{align*}
			\delta_{h}(p;q)&:=\left.\frac{d h}{d\varepsilon}(p+\varepsilon q)\right|_{\varepsilon=0}
			\end{align*}
			exists, we say that $\delta_{h}(p;q)$ is the Gateaux differential of $h$ at $p$ in the direction $q$.
		\end{enumerate}
	\end{defi}
	
	The next proposition shows an application of G\^ ateaux differentials.
	
	\begin{prop}\label{GD}
		Let $\mathcal{X}$ be a linear space and $\mathcal{V}$ a subset of $\mathcal X$. Let $p\in\mathcal V$ be an internal point in the direction $q\in\mathcal X$ and $h:\mathcal V\to\rz$ a given function. If the Gateaux differential of $h$ at $p$ in the  direction $q$ exists and $h$ has a maximum at $p$, then 
		\begin{align*}
		\delta_h(p;q)=0.
		\end{align*}
	\end{prop}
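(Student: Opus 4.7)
The plan is to reduce the question to the classical one-variable Fermat theorem by restricting $h$ to the line through $p$ in the direction $q$. Since $p$ is internal in the direction $q$, there exists $\varepsilon_0>0$ such that $p+\varepsilon q\in\mathcal V$ for every $\varepsilon\in(-\varepsilon_0,\varepsilon_0)$, so the auxiliary function
\begin{align*}
\phi:(-\varepsilon_0,\varepsilon_0)\to\rz,\qquad \phi(\varepsilon):=h(p+\varepsilon q),
\end{align*}
is well defined on an open interval containing $0$.

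Next I would transfer the maximality hypothesis from $h$ to $\phi$. Since $h$ attains its maximum at $p$ on $\mathcal V$ and every point of the form $p+\varepsilon q$ with $\varepsilon\in(-\varepsilon_0,\varepsilon_0)$ belongs to $\mathcal V$, we have $\phi(0)=h(p)\ge h(p+\varepsilon q)=\phi(\varepsilon)$ for all such $\varepsilon$; hence $0$ is an interior maximizer of $\phi$. By the very definition of the Gateaux differential, $\phi$ is differentiable at $0$ with $\phi'(0)=\delta_h(p;q)$. The classical Fermat theorem applied to $\phi$ at the interior maximum $\varepsilon=0$ then yields $\phi'(0)=0$, which is the claim.

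There is no real obstacle here: the only subtlety is checking that the curve $\varepsilon\mapsto p+\varepsilon q$ stays inside the domain $\mathcal V$ so that $\phi$ makes sense on a full two-sided neighborhood of $0$, and this is precisely what the internal-point hypothesis guarantees. The result is essentially a one-line reduction to single-variable calculus and serves as the workhorse lemma that will later be applied to the objective $v$ in~(\ref{PI}) along perturbations of an optimal plan.
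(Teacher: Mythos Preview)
Your argument is correct and is precisely the standard one: restrict to the line, use the internal-point hypothesis to get a two-sided interval, and apply the one-variable Fermat theorem to $\phi(\varepsilon)=h(p+\varepsilon q)$ at the interior maximum $\varepsilon=0$. The paper itself does not spell out a proof at all; it simply refers the reader to Theorem~1 on page~178 of Luenberger~\cite{Lue}, whose proof is exactly the reduction you wrote down, so your proposal is both correct and faithful to the intended argument.
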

	\begin{proof}
		See Theorem 1 in page 178 of \cite{Lue}.
	\end{proof}

	The plan to prove the MP (\ref{MPX})-(\ref{MPY}) and the TC (\ref{TC}) below is straightforward: we will calculate the Gateaux differential, in a certain direction,  of the performance index (\ref{PI}) at the optimal plan, and then use Proposition \ref{GD}. 
	
	We will need the following assumption to estimate the Gateaux differential of the function $v$ in (\ref{PI}). As usual, $\displaystyle\frac{\partial}{\partial x}$ and $\displaystyle\frac{\partial}{\partial y}$ denote the gradients corresponding to the first and the second variables, respectively.

	\begin{assu}\label{AMP}
		Let  $\hat \psi=(\hat u_0,\hat u_1,\dots) \in \Psi$. For each  $\tau\in\nat$, define the sequence of functions $\rho^\tau_t: U_\tau\to\rzn$ as $$\rho_t^\tau(u):=\frac{\partial g_t}{\partial x}( x_t^{{\hat\psi^\tau(u)}}, \hat u_t)\prod_{s=\tau+1}^{t-1}\frac{\partial f_s}{\partial x}(x_s^{{\hat \psi^\tau(u)}}, \hat u_s),$$ 
		where $\hat\psi^\tau(u)=(\hat u_0,\dots,\hat u_{\tau-1},u,\hat u_{\tau+1},\dots)$. 
		We suppose that, for each $\tau\in\nat$, there exists an open neighborhood $\mathcal O_\tau\subset U_\tau$ of $\hat u_\tau$ such that the series $\displaystyle\sum_{t=\tau+1}^{\infty}\rho^\tau_t$ converges uniformly on $\mathcal O_\tau$. 
	\end{assu}

	If 
	$A_1, A_2\dots$ is a sequence of square matrices, we write
	\begin{align*}
	\prod_{s=\tau}^t A_s=\left\{ \begin{array}{lcc}
	A_t\dots A_{\tau+1}A_{\tau} &   if  & \tau\le t \\
	\\ \mathbb I &  if &  \tau>t ,
	\end{array}
	\right.
	\textit{} \end{align*}
	where $\mathbb I$ is the identity matrix.

	Computing the Gateaux differential of $v$ may be too  technical; calculations are in the following lemma. The proof is in Section \ref{secproo}.
	We consider the vector space $\Lambda$ of all sequences in $\mathbb R^m$ with the standard addition and scalar multiplication. We consider row vectors $y\in\mathbb R^m$, so the transpose $y^*$ is a column vector.
	\begin{lem}\label{DV}
		Let  $\hat \psi=\left\lbrace \hat u_t\right\rbrace \in \Psi$ be a plan for which Assumption \ref{AMP} holds. Let $y\in\rzm$ and $\tau\in\nat$. Then $\hat\psi$ is an internal point of $\Psi$ in the direction $\psi^{\tau,y}\in\Lambda$, where $\psi^{\tau,y}$ is defined as 
		\begin{align*}
		\psi^{\tau,y}_t:=\left\{ \begin{array}{lcc}
		\displaystyle y &   if  & t=\tau \\
		\\ 0 &  if & t\neq\tau, \\
		\end{array}
		\right.
		\end{align*}
		for all $t\in\nat$. Moreover, if $v$ is the function in (\ref{PI}), its G\^ ateaux differential at $\hat\psi$ in the direction $\psi^{\tau,y}$ exists and is given by 
		\begin{align*}
		\delta_v(\hat\psi;\psi^{\tau,y})= \left( \sum_{t=\tau+1}^\infty\frac{\partial g_t}{\partial x}(x_t^{\hat{\psi}},\hat u_t)\prod_{s=\tau+1}^{t-1}\frac{\partial f_s}{\partial x}(x_s^{\hat{\psi}},\hat u_s)\right)\frac{\partial f_\tau}{\partial y}(x_\tau^{\hat{\psi}},\hat u_\tau)y^*+\frac{\partial g_\tau}{\partial y}(x_\tau^{\hat{\psi}},\hat u_\tau)y^*.
		\end{align*} 
	\end{lem}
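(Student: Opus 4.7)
The plan is to reduce the statement to a routine differentiation exercise, with the only delicate step being the interchange of the derivative and the infinite sum that defines $v$. Throughout, write $\psi_\varepsilon := \hat\psi^\tau(\hat u_\tau+\varepsilon y) = \hat\psi + \varepsilon\psi^{\tau,y}$.

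First I would verify the internal-point claim. Because $\psi^{\tau,y}$ is supported only at time $\tau$, the plan $\psi_\varepsilon$ agrees with $\hat\psi$ at every index $t\neq \tau$ and equals $\hat u_\tau+\varepsilon y$ at index $\tau$. Since $\mathcal O_\tau\subset U_\tau$ is an open neighborhood of $\hat u_\tau$, there is $\varepsilon_0>0$ such that $\hat u_\tau+\varepsilon y\in\mathcal O_\tau$ for all $\varepsilon\in(-\varepsilon_0,\varepsilon_0)$, and hence $\psi_\varepsilon\in\Psi$ on that interval.

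Next I would analyse the $\varepsilon$-dependence of the induced trajectory. By (\ref{dynamic}), $x_t^{\psi_\varepsilon}=x_t^{\hat\psi}$ for every $t\le\tau$, so these terms contribute nothing to $\delta_v$. A straightforward induction using the chain rule then gives, for $t>\tau$,
\begin{align*}
\left.\frac{d}{d\varepsilon}x_t^{\psi_\varepsilon}\right|_{\varepsilon=0}
=\left(\prod_{s=\tau+1}^{t-1}\frac{\partial f_s}{\partial x}(x_s^{\hat\psi},\hat u_s)\right)\frac{\partial f_\tau}{\partial y}(x_\tau^{\hat\psi},\hat u_\tau)y^*.
\end{align*}
Splitting $v(\psi_\varepsilon)=\sum_{t<\tau}g_t(x_t^{\hat\psi},\hat u_t)+g_\tau(x_\tau^{\hat\psi},\hat u_\tau+\varepsilon y)+\sum_{t>\tau}g_t(x_t^{\psi_\varepsilon},\hat u_t)$, the first sum is constant in $\varepsilon$, the middle term differentiates to $\frac{\partial g_\tau}{\partial y}(x_\tau^{\hat\psi},\hat u_\tau)y^*$, and the formal term-by-term derivative of the tail produces precisely the series appearing in the statement of the lemma.

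The hard part is justifying that the $\varepsilon$-derivative of the tail equals the sum of the term-by-term derivatives. To do so I would rewrite the tail as the composition $\phi(\varepsilon)=\Phi(\hat u_\tau+\varepsilon y)$ with $\Phi(u):=\sum_{t=\tau+1}^\infty g_t(x_t^{\hat\psi^\tau(u)},\hat u_t)$ viewed as a function of $u\in\mathcal O_\tau$. The $u$-derivative of the generic summand is, by the chain rule, $\rho_t^\tau(u)\,\frac{\partial f_\tau}{\partial y}(x_\tau^{\hat\psi},u)$. Assumption \ref{AMP} gives uniform convergence of $\sum_{t>\tau}\rho_t^\tau$ on $\mathcal O_\tau$; since $\partial f_\tau/\partial y$ is continuous in $u$ (Assumption \ref{WellPosed}(d)) and therefore bounded on a small neighborhood of $\hat u_\tau$, the series of $u$-derivatives converges uniformly there as well. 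Combining this uniform convergence with the pointwise convergence of $\Phi(\hat u_\tau)$ (which follows from Assumption \ref{WellPosed}(b)), the standard theorem on differentiating a series term by term applies, so $\Phi$ is differentiable at $\hat u_\tau$ and its derivative is the expected series. Evaluating $\phi'(0)=\Phi'(\hat u_\tau)y^*$ and adding the contribution from the $t=\tau$ term yields the formula claimed in the lemma.
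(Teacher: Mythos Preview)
Your proposal is correct and follows essentially the same route as the paper: verify the internal-point property via the open neighborhood $\mathcal O_\tau$, compute the $\varepsilon$-derivative of each summand by the chain rule (the paper does this for general $\varepsilon$ via auxiliary functions $h_t,k_t,l_T$, while you evaluate at $\varepsilon=0$ and then recast the tail as a function of $u$), and finally invoke the uniform convergence of $\sum_{t>\tau}\rho_t^\tau$ from Assumption~\ref{AMP} to justify interchanging the limit with the derivative. One small caveat: you appeal to continuity of $\partial f_\tau/\partial y$ to get local boundedness, but Assumption~\ref{WellPosed}(d) only asserts differentiability, not $C^1$; the paper's proof glosses over the same point, so your argument is at the same level of rigor as the original.
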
 
	
	We can now state one of our main results.
	\begin{thm}\label{MP}
		Let $\hat \psi=\left\lbrace\hat u_t \right\rbrace \in\Psi$ be a plan for which Assumption \ref{AMP} holds. If $\hat\psi$ is an optimal plan of the OCP (\ref{dynamic})-(\ref{tuple}), then there exists a sequence $\left\lbrace \lambda_t\right\rbrace_{t=1}^\infty$ in $\rzn$ such that
		\begin{enumerate}
			\item[(a)] For all $t\in\natu$,
			\begin{align}\label{MPX}
			\frac{\partial g_t}{\partial x}(x_t^{\hat\psi},\hat u_t)+\lambda_{t+1}\frac{\partial f_t}{\partial x}(x_t^{\hat\psi},\hat u_t)=\lambda_t,
			\end{align}
			\item[(b)] For all $t\in\nat$,
			\begin{align}\label{MPY}
			\frac{\partial g_t}{\partial y}(x_t^{\hat\psi},\hat u_t)+\lambda_{t+1}\frac{\partial f_t}{\partial y}(x_t^{\hat\psi},\hat u_t)=0,
			\end{align}
			\item[(c)] For every $h\in\natu$, we have the  transversality condition (TC)
			\begin{align}\label{TC}
			\lim_{t\to\infty}\lambda_t \prod_{s=h}^{t-1}\frac{\partial f_s}{\partial x}(x_s^{\hat\psi},\hat u_s)=0.
			\end{align}  
		\end{enumerate}
		Moreover, each $\lambda_t$ is given by 
		\begin{align}\label{multipliers}
		\lambda_t=\sum_{k=t}^\infty\frac{\partial g_k}{\partial x}(x_k^{\hat{\psi}},\hat u_k)\prod_{s=t}^{k-1}\frac{\partial f_s}{\partial x}(x_s^{\hat{\psi}},\hat u_s).
		\end{align}
	\end{thm}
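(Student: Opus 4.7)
The plan is to \emph{define} the multipliers $\lambda_t$ directly by the closed-form expression (\ref{multipliers}) and then check that items (a)--(c) fall out by elementary manipulations from Lemma~\ref{DV}, Proposition~\ref{GD} and Assumption~\ref{AMP}. First I would observe that, for each $t\in\natu$, the series defining $\lambda_t$ is exactly $\sum_{k=t}^\infty \rho^{t-1}_k(\hat u_{t-1})$; since $\hat u_{t-1}\in\mathcal O_{t-1}$, Assumption~\ref{AMP} guarantees that this series converges, so $\lambda_t$ is well defined.

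For part (b), I would fix $\tau\in\nat$ and $y\in\rzm$. Lemma~\ref{DV} asserts that $\hat\psi$ is an internal point of $\Psi$ in the direction $\psi^{\tau,y}$ and gives the explicit value of $\delta_v(\hat\psi;\psi^{\tau,y})$. Since $\hat\psi$ is optimal, Proposition~\ref{GD} forces $\delta_v(\hat\psi;\psi^{\tau,y})=0$. Recognising the large parenthesis in the formula of Lemma~\ref{DV} as precisely $\lambda_{\tau+1}$ (apply (\ref{multipliers}) at $t=\tau+1$), the identity becomes $\bigl(\lambda_{\tau+1}\tfrac{\partial f_\tau}{\partial y}+\tfrac{\partial g_\tau}{\partial y}\bigr)y^{*}=0$, and since $y$ is arbitrary, (\ref{MPY}) follows. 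For part (a), I would split off the $k=t$ term in (\ref{multipliers}) and factor $\tfrac{\partial f_t}{\partial x}(x_t^{\hat\psi},\hat u_t)$ out of the remaining tail by writing $\prod_{s=t}^{k-1}=\bigl(\prod_{s=t+1}^{k-1}\bigr)\tfrac{\partial f_t}{\partial x}(x_t^{\hat\psi},\hat u_t)$; the residual sum is exactly $\lambda_{t+1}$, yielding the adjoint recursion (\ref{MPX}).

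For part (c), given $h\in\natu$, I would multiply (\ref{multipliers}) on the right by $\prod_{s=h}^{t-1}\tfrac{\partial f_s}{\partial x}(x_s^{\hat\psi},\hat u_s)$ and telescope the two adjacent products into a single product running from $s=h$ to $s=k-1$. The resulting expression equals $\sum_{k=t}^\infty \rho^{h-1}_k(\hat u_{h-1})$, i.e., the tail from index $t$ of the series whose convergence is ensured by Assumption~\ref{AMP}; letting $t\to\infty$ gives (\ref{TC}). The main obstacle is almost entirely notational bookkeeping: aligning the index ranges of the nested products so that the bracketed factor in Lemma~\ref{DV} cleanly coincides with $\lambda_{\tau+1}$, and so that the product appearing in (\ref{TC}) absorbs the internal product in (\ref{multipliers}) into a single range. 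Once these identifications are in place, no further analytic input is required beyond the convergence already granted by Assumption~\ref{AMP}.
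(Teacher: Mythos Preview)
Your proposal is correct and follows essentially the same route as the paper's proof: define $\lambda_t$ by (\ref{multipliers}), verify the adjoint recursion (a) by splitting off the first term and factoring, obtain (b) by combining Lemma~\ref{DV} with Proposition~\ref{GD} and identifying the bracketed sum as $\lambda_{\tau+1}$, and deduce (c) by telescoping the products and recognising the result as the tail of the convergent series supplied by Assumption~\ref{AMP}. Your explicit identification of $\lambda_t$ and the expression in (c) with $\sum_k\rho^{t-1}_k(\hat u_{t-1})$ and $\sum_{k\ge t}\rho^{h-1}_k(\hat u_{h-1})$ respectively makes the appeal to Assumption~\ref{AMP} slightly more transparent than in the paper, but otherwise the arguments coincide.
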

	\begin{proof}
		
		\begin{enumerate}
			\item[(a)] Pick an arbitrary $\tau\in\natu$. Then
			\begin{align*}
			\lambda_\tau&:=\sum_{t=\tau}^\infty\frac{\partial g_t}{\partial x}(x_t^{\hat{\psi}},\hat u_t)\prod_{s=\tau}^{t-1}\frac{\partial f_s}{\partial x}(x_s^{\hat{\psi}},\hat u_s)\\
			&=\frac{\partial g_\tau}{\partial x}(x_\tau^{\hat{\psi}},\hat u_\tau)+\sum_{t=\tau+1}^\infty\frac{\partial g_t}{\partial x}(x_t^{\hat{\psi}},\hat u_t)\prod_{s=\tau}^{t-1}\frac{\partial f_s}{\partial x}(x_s^{\hat{\psi}},\hat u_s)\\
			&=\frac{\partial g_\tau}{\partial x}(x_\tau^{\hat{\psi}},\hat u_\tau)+\left( \sum_{t=\tau+1}^\infty\frac{\partial g_t}{\partial x}(x_t^{\hat{\psi}},\hat u_t)\prod_{s=\tau+1}^{t-1}\frac{\partial f_s}{\partial x}(x_s^{\hat{\psi}},\hat u_s)\right)\frac{\partial f_\tau}{\partial x}(x_\tau^{\hat{\psi}},\hat u_\tau)\\
			&=\frac{\partial g_\tau}{\partial x}(x_\tau^{\hat{\psi}},\hat u_\tau)+\lambda_{\tau+1}\frac{\partial f_\tau}{\partial x}(x_\tau^{\hat{\psi}},\hat u_\tau).
			\end{align*}
			\item[(b)] Fix $\tau\in\nat$ and $y\in\rzm$ arbitrary. By Lemma \ref{DV} and Proposition \ref{GD},
			\begin{align*}
			\left( \sum_{t=\tau+1}^\infty\frac{\partial g_t}{\partial x}(x_t^{\hat{\psi}},\hat u_t)\prod_{s=\tau+1}^{t-1}\frac{\partial f_s}{\partial x}(x_s^{\hat{\psi}},\hat u_s)\right)\frac{\partial f_\tau}{\partial y}(x_\tau^{\hat{\psi}},\hat u_\tau)y^*+\frac{\partial g_\tau}{\partial y}(x_\tau^{\hat{\psi}},\hat u_\tau)y^*=0,
			\end{align*}
			that is 
			\begin{align*}
			\left[\frac{\partial g_\tau}{\partial y}(x_\tau^{\hat{\psi}},\hat u_\tau)y^*+\lambda_{\tau+1}\frac{\partial f_\tau}{\partial y}(x_\tau^{\hat{\psi}},\hat u_\tau)y^*\right]=0.
			\end{align*}
			Since this holds for any $y\in\rzm$, $(b)$ follows.
			\item[(c)] By Assumption \ref{AMP}, the series
			\begin{align*}
			\sum_{k=h}^\infty\frac{\partial g_k}{\partial x}(x_k^{\hat{\psi}},\hat u_k)\prod_{s=h}^{k-1}\frac{\partial f_s}{\partial x}(x_s^{\hat{\psi}},\hat u_s),
			\end{align*}
			converges for any $h\in\natu$ and
			\begin{align*}
			\lambda_t \prod_{s=h}^{t-1}\frac{\partial f_s}{\partial x}(x_s^{\hat\psi},\hat u_s)&=\left( \sum_{k=t}^\infty\frac{\partial g_k}{\partial x}(x_k^{\hat{\psi}},\hat u_k)\prod_{s=t}^{k-1}\frac{\partial f_s}{\partial x}(x_s^{\hat{\psi}},\hat u_s)\right)  \prod_{s=h}^{t-1}\frac{\partial f_s}{\partial x}(x_s^{\hat\psi},\hat u_s)\\
			&=\sum_{k=t}^\infty\frac{\partial g_k}{\partial x}(x_k^{\hat{\psi}},\hat u_k)\prod_{s=h}^{k-1}\frac{\partial f_s}{\partial x}(x_s^{\hat{\psi}},\hat u_s).
			\end{align*}
			Letting $t$ tend to infinity, $(c)$ follows.
		\end{enumerate}
	\end{proof}
	
	Actually, the MP (\ref{MPX})-(\ref{MPY}) is so-named
	in analogy with the well-known maximum principle in the continuous-time case (see \cite{Pont}). See \cite{HistoryMP} for the history of the continuous-time maximum principle.
	
	\begin{remark}
		If we allow each $f_t$ and $g_t$ to be continuously differentiable, we can use the lines of proof of Theorem~2.2 in \cite{Multipliers} to obtain Theorem~\ref{MP}. The idea of our proof can be used to prove the case of Markov strategies and control sets that depend on the state, which is one of our main results. The explicitly defined adjoint sequence given by \eqref{multipliers} (in the present paper) as well as the transversality condition \eqref{TC} are taken from \cite{Multipliers}.
	\end{remark}

	It turns out that the sequence $\left\lbrace \lambda_t \right\rbrace $ given in Theorem \ref{MP} must be unique.

	\begin{prop}\label{mul}
		Suppose that a plan $\hat\psi\in \Psi$ satisfies (\ref{MPX}) and the TC (\ref{TC}). Then 
		\begin{align}\label{mul2}
		\lambda_t=\sum_{k=t}^\infty\frac{\partial g_k}{\partial x}(x_k^{\hat{\psi}},\hat u_k)\prod_{s=t}^{k-1}\frac{\partial f_s}{\partial x}(x_s^{\hat{\psi}},\hat u_s).
		\end{align}
	\end{prop}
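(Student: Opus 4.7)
The plan is to iterate the backwards recursion \eqref{MPX} a finite number of steps, identify a ``boundary remainder'' that matches precisely the expression appearing in the transversality condition \eqref{TC}, and then pass to the limit. Let me write $G_s := \frac{\partial g_s}{\partial x}(x_s^{\hat\psi},\hat u_s)$ and $F_s := \frac{\partial f_s}{\partial x}(x_s^{\hat\psi},\hat u_s)$ to compress notation.

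First I would rewrite \eqref{MPX} in the form $\lambda_s = G_s + \lambda_{s+1}F_s$ and substitute it into itself. A straightforward induction on $N\ge 1$ shows
\begin{align*}
\lambda_t \;=\; \sum_{k=t}^{t+N-1} G_k \prod_{s=t}^{k-1} F_s \;+\; \lambda_{t+N}\prod_{s=t}^{t+N-1} F_s,
\end{align*}
where the convention $\prod_{s=t}^{t-1}F_s = \mathbb I$ handles the first term ($k=t$) correctly. The induction step consists of expanding $\lambda_{t+N} = G_{t+N} + \lambda_{t+N+1}F_{t+N}$ and noting that multiplying by $\prod_{s=t}^{t+N-1}F_s$ produces, for the new summand, $G_{t+N}\prod_{s=t}^{t+N-1}F_s$, which is exactly the $k=t+N$ term, and for the remainder, $\lambda_{t+N+1}\prod_{s=t}^{t+N}F_s$.

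Next I would fix $t\in\natu$ and let $N\to\infty$ with $h=t$ in the transversality condition. By \eqref{TC},
\begin{align*}
\lim_{N\to\infty}\lambda_{t+N}\prod_{s=t}^{t+N-1}F_s \;=\; 0.
\end{align*}
Since $\lambda_t$ is a fixed vector and equals the sum of the partial sum and the boundary term for every $N$, the partial sums $\sum_{k=t}^{t+N-1} G_k \prod_{s=t}^{k-1} F_s$ must converge; their limit is precisely $\lambda_t$, which yields \eqref{mul2}.

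There is essentially no hard step: the whole argument is a telescoping of the adjoint recursion, and the transversality condition is tailored exactly to kill the remainder. The only point requiring a little care is to perform the induction with the product convention used in the paper (products taken with decreasing indices and with $\mathbb I$ for empty products), so that the indices line up with those in \eqref{mul2}; once that bookkeeping is right, the proof is immediate.
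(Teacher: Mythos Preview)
Your proof is correct and follows essentially the same approach as the paper: iterate the adjoint recursion \eqref{MPX} finitely many steps by induction to obtain a partial sum plus a remainder $\lambda_{t+N}\prod_{s=t}^{t+N-1}F_s$, then invoke the transversality condition \eqref{TC} with $h=t$ to annihilate the remainder in the limit. The paper's write-up is slightly more terse, but the argument is identical.
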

	\begin{proof}
		Let $\left\lbrace \lambda'_t\right\rbrace_{t=1}^\infty $ be a sequence satisfying (\ref{MPX}) and (\ref{TC}). It can be proved by induction that
		\begin{align*}
		\lambda'_t=\sum_{s=t}^{h}\frac{\partial g_s}{\partial x}(x_s^{\hat{\psi}},\hat u_s)\prod_{i=t}^{s-1}\frac{\partial f_i}{\partial x}(x_i^{\hat{\psi}},\hat u_i)+\lambda'_{h+1}\prod_{i=t}^{h}\frac{\partial f_i}{\partial x}(x_i^{\hat{\psi}},\hat u_i),
		\end{align*}
		for $h\ge t$. Now, letting h tend to infinite 
		\begin{align*}
		\lambda'_t=\sum_{s=t}^{\infty}\frac{\partial g_s}{\partial x}(x_s^{\hat{\psi}},\hat u_s)\prod_{i=t}^{s-1}\frac{\partial f_i}{\partial x}(x_i^{\hat{\psi}},\hat u_i).
		\end{align*}
		Comparing with (\ref{mul2}), the result follows.
	\end{proof}

	\begin{cor}\label{mulunique}
		The sequence $\left\lbrace \lambda_t\right\rbrace_{t=1}^\infty $ given in Theorem \ref{MP} is unique.
	\end{cor}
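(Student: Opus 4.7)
The corollary is an immediate consequence of Proposition \ref{mul}. The plan is to take any two sequences $\left\lbrace \lambda_t \right\rbrace $ and $\left\lbrace \lambda'_t \right\rbrace $ that satisfy the conclusions of Theorem \ref{MP}, so that in particular both satisfy the recursion (\ref{MPX}) and the transversality condition (\ref{TC}), and then apply Proposition \ref{mul} to each of them separately. The proposition forces each sequence to coincide, term by term, with the explicit series on the right-hand side of (\ref{mul2}); hence $\lambda_t=\lambda'_t$ for every $t\in\natu$, which is the desired uniqueness.

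I do not expect any substantive obstacle in this argument, because the real work has already been carried out in the proof of Proposition \ref{mul}: the inductive identity derived there expresses any sequence satisfying (\ref{MPX}) as a finite sum plus a remainder term involving a product of partial derivatives $\partial f_i/\partial x$, and the transversality condition (\ref{TC}) is precisely what is needed to make that remainder vanish as the horizon $h$ tends to infinity. The only mild point worth keeping in mind is that the series in (\ref{mul2}) must be convergent in order for the limiting identification to make sense; this is guaranteed by Assumption \ref{AMP} and was already observed in part (c) of the proof of Theorem \ref{MP}. Consequently, the proof of the corollary reduces to a single invocation of Proposition \ref{mul}.
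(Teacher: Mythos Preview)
Your proposal is correct and matches the paper's own approach: the corollary is stated immediately after Proposition~\ref{mul} without a separate proof, precisely because uniqueness follows at once by applying that proposition to any sequence satisfying (\ref{MPX}) and (\ref{TC}). Your additional remarks about convergence of the series and the role of Assumption~\ref{AMP} are accurate and consistent with how the paper uses these facts in the proof of Theorem~\ref{MP}(c).
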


	\subsection{Sufficient Conditions}\label{SUFCON}
	
	We have seen that the MP (\ref{MPX})-(\ref{MPY}) and the TC (\ref{TC}) are necessary conditions for optimality. Under suitable assumptions, they are also sufficient; see Theorem  \ref{SufThm} and Assumption \ref{SuffCon} below.
	
	As in the previous subsection, we require a proposition concerning Gateaux differentials.

	\begin{prop}\label{SGD}
		Let $\mathcal{X}$ be a linear space.
		Suppose $\mathcal V$ is a convex subset of $\mathcal X$ and $h:\mathcal V\to\rz$ a concave function. If $p\in\mathcal V$ satisfies $\delta_h(p;q-p)=0$ for all $q\in\mathcal V$, then $p$ maximizes $h$.
	\end{prop}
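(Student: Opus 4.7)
The plan is to reduce the statement to the classical one-variable fact that a concave function lies below its tangent line at any point where it is differentiable. I would fix an arbitrary $q\in\mathcal V$ and define $\phi:[0,1]\to\rz$ by $\phi(\varepsilon):=h(p+\varepsilon(q-p))$. By convexity of $\mathcal V$, the point $p+\varepsilon(q-p)=(1-\varepsilon)p+\varepsilon q$ lies in $\mathcal V$ for every $\varepsilon\in[0,1]$, so $\phi$ is well defined; since $h$ is concave and $\varepsilon\mapsto p+\varepsilon(q-p)$ is affine, $\phi$ is concave on $[0,1]$.

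Next, since $\delta_h(p;q-p)$ is assumed to exist and to equal $0$, the right derivative of $\phi$ at $0$ also exists and equals $0$. The key technical fact I would invoke is that, for a concave function $\phi$ on $[0,1]$, the difference quotient $\varepsilon\mapsto(\phi(\varepsilon)-\phi(0))/\varepsilon$ is non-increasing on $(0,1]$; this is standard and follows directly from the concavity inequality applied to $0$, $\varepsilon_1<\varepsilon_2$. Consequently,
\begin{align*}
\phi(1)-\phi(0)\le\lim_{\varepsilon\downarrow 0}\frac{\phi(\varepsilon)-\phi(0)}{\varepsilon}=\phi'(0)=\delta_h(p;q-p)=0.
\end{align*}
Translating back, $h(q)-h(p)=\phi(1)-\phi(0)\le 0$. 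Since $q\in\mathcal V$ was arbitrary, $p$ maximizes $h$ over $\mathcal V$, which is exactly the claim.

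The only delicate point, which I expect to be the main obstacle, is the interplay between the definition of the Gateaux differential (which demands two-sided variation in a symmetric interval $(-\varepsilon_0,\varepsilon_0)$) and the convexity hypothesis (which only guarantees $p+\varepsilon(q-p)\in\mathcal V$ for $\varepsilon\in[0,1]$). Since the hypothesis already posits the existence of $\delta_h(p;q-p)$, either $p$ is in fact an internal point of $\mathcal V$ in the direction $q-p$ or the symbol is being used in the one-sided sense; in both readings the right-hand derivative $\phi'(0^+)$ exists and equals $0$, which is all that the concave-function inequality above requires. This bookkeeping, rather than the concavity inequality itself, is where care is needed.
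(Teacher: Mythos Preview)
Your argument is correct and follows essentially the same route as the paper's proof: both restrict to the segment from $p$ to $q$, use concavity to bound the difference quotient $(h(p+\varepsilon(q-p))-h(p))/\varepsilon$ below by $h(q)-h(p)$, and let $\varepsilon\downarrow 0$. The paper writes the concavity inequality $h((1-\varepsilon)p+\varepsilon q)\ge (1-\varepsilon)h(p)+\varepsilon h(q)$ directly rather than introducing the auxiliary function $\phi$ and invoking monotonicity of difference quotients, but this is only a cosmetic difference; the subtlety you flag about one-sided versus two-sided variation is handled in the paper exactly as you anticipate, by using only the right-hand limit.
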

	\begin{proof}
		Since $\mathcal V$ is convex, $p$ is an internal point in the direction $q-p$ for any $q\in\mathcal V$.
		By concavity of $h$,
		\begin{align*}
		h(p+\varepsilon(q-p))\ge h(p)+\varepsilon(h(q)-h(p)),
		\end{align*}
		for all $0\le\varepsilon\le1$. That is
		\begin{align*}
		\frac{h(p+\varepsilon(q-p))-h(p)}{\varepsilon}\ge h(q)-h(p).
		\end{align*}
		Letting $\varepsilon\downarrow0$ yields the result.
	\end{proof}

	The following convexity-concavity assumption ensures the sufficiency of the MP (\ref{MPX})-(\ref{MPY}) and the TC (\ref{TC}). 
	
	\begin{assu}
		The optimal control model (\ref{tuple}) satisfies the following:
		\begin{enumerate}\label{SuffCon}
			\item[(a)]  the set of plans $\Psi$ is convex;	\item[(b)]	the performance index $v$ in (\ref{PI}) is concave;
			\item[(c)]	 there exists a sequence of real  numbers $\left\lbrace m_t\right\rbrace $ such that $\sum_{t=0}^\infty m_t$ converges and 
			$
			g_t(x_t^\psi,u_t)\ge m_t
			$ 
			for all $\psi=(u_0,u_1,\dots)\in\Psi$.
		\end{enumerate}
	\end{assu}

	\begin{thm}\label{SufThm}
		Let $\hat{\psi}=\left\lbrace \hat u_t\right\rbrace\in\Psi$ be a plan for which Assumption \ref{AMP} holds. Suppose that $\hat\psi$ satisfies the MP (\ref{MPX})-(\ref{MPY}) and the TC (\ref{TC}). If Assumption \ref{SuffCon} holds, then $\hat\psi$ is an optimal plan for the OCP (\ref{dynamic})-(\ref{tuple}).
	\end{thm}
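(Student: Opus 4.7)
My plan is to invoke Proposition \ref{SGD}: since $\Psi$ is convex and $v$ is concave by Assumption \ref{SuffCon}(a)--(b), it suffices to verify $\delta_v(\hat\psi;\psi-\hat\psi)=0$ for every $\psi\in\Psi$. Fix $\psi\in\Psi$, set $e_t:=u_t-\hat u_t$, and observe that $\psi_\varepsilon:=\hat\psi+\varepsilon e$ lies in $\Psi$ for $\varepsilon\in[0,1]$ by convexity. The induced trajectory is differentiable in $\varepsilon$ at $\varepsilon=0$, with sensitivity $\delta x_t$ satisfying $\delta x_0=0$ and the linearised recursion $\delta x_{t+1}=\frac{\partial f_t}{\partial x}(x_t^{\hat\psi},\hat u_t)\,\delta x_t+\frac{\partial f_t}{\partial y}(x_t^{\hat\psi},\hat u_t)\,e_t$.

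Differentiating $v(\psi_\varepsilon)=\sum_{t=0}^\infty g_t(x_t^{\psi_\varepsilon},\hat u_t+\varepsilon e_t)$ term by term at $\varepsilon=0^+$ gives
\begin{align*}
\delta_v(\hat\psi;e)=\sum_{t=0}^\infty\left[\frac{\partial g_t}{\partial x}(x_t^{\hat\psi},\hat u_t)\,\delta x_t+\frac{\partial g_t}{\partial y}(x_t^{\hat\psi},\hat u_t)\,e_t\right].
\end{align*}
Substituting the MP identities (\ref{MPX})--(\ref{MPY}) and using $\delta x_0=0$ to absorb the $t=0$ term, each summand collapses to $\lambda_t\delta x_t-\lambda_{t+1}\delta x_{t+1}$. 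Telescoping then yields $\delta_v(\hat\psi;e)=-\lim_{N\to\infty}\lambda_{N+1}\delta x_{N+1}$. Writing $\delta x_{N+1}=\sum_{h=0}^{N}\bigl(\prod_{s=h+1}^{N}\frac{\partial f_s}{\partial x}(x_s^{\hat\psi},\hat u_s)\bigr)\frac{\partial f_h}{\partial y}(x_h^{\hat\psi},\hat u_h)\,e_h$, every bracketed factor $\lambda_{N+1}\prod_{s=h+1}^{N}\frac{\partial f_s}{\partial x}$ tends to $0$ as $N\to\infty$ by the transversality condition (\ref{TC}); a dominated-type argument using the summable lower bound in Assumption \ref{SuffCon}(c) transfers this pointwise vanishing to the whole sum, so $\delta_v(\hat\psi;e)=0$, and Proposition \ref{SGD} gives $v(\hat\psi)\ge v(\psi)$.

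The hard step is the term-by-term differentiation above: Lemma \ref{DV} was established only for the single-coordinate perturbations $\psi^{\tau,y}$, whereas here the direction $e=\psi-\hat\psi$ excites every coordinate at once. Justifying the exchange of $d/d\varepsilon$ with the infinite sum along such a ``full'' direction requires either strengthening the uniform-convergence statement of Assumption \ref{AMP} so that it holds along the entire path $\{\psi_\varepsilon:\varepsilon\in[0,1]\}$, or first establishing the inequality for the truncations $e^{(N)}:=(e_0,\dots,e_N,0,0,\dots)$ (where Lemma \ref{DV} applies coordinate by coordinate, giving $\delta_v(\hat\psi;e^{(N)})=0$) and then passing to the limit $N\to\infty$ via the minorant from Assumption \ref{SuffCon}(c). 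Once this analytic point is settled, the telescoping and the TC-based vanishing of the boundary term are routine.
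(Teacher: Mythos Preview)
Your telescoping route is attractive but carries two gaps that the stated hypotheses do not close. The first you flag yourself: Lemma \ref{DV} and Assumption \ref{AMP} deliver the Gateaux differential only along single-coordinate directions $\psi^{\tau,y}$, and nothing in the setup establishes linearity of $\delta_v(\hat\psi;\cdot)$, so the formula $\delta_v(\hat\psi;e)=\sum_t[\cdots]$ for the full direction $e=\psi-\hat\psi$ is not yet justified. The second is the boundary term: even granting that $\lambda_{N+1}\prod_{s=h+1}^{N}\partial_x f_s\to 0$ for each fixed $h$ by (\ref{TC}), the expression $\sum_{h=0}^{N}(\cdots)e_h^*$ has a growing number of summands, and Assumption \ref{SuffCon}(c) bounds only the \emph{values} $g_t(x_t^\psi,u_t)$ from below---it gives no control whatsoever on $\partial_y f_h$, on $e_h$, or on the matrix products---so there is simply no majorant available for the ``dominated-type argument'' you invoke.

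The paper avoids both difficulties by truncating, but the decisive move---which your sketch gestures toward without carrying out---is to apply Proposition \ref{SGD} \emph{at the truncated level} rather than to $v$ itself. One sets $v_{\hat\psi}^k(\psi):=v(u_0,\dots,u_k,\hat u_{k+1},\hat u_{k+2},\dots)$, which inherits concavity from $v$. Repeating the computation of Lemma \ref{DV} (now legitimate under Assumption \ref{AMP}, since only finitely many controls are perturbed) gives
\[
\delta_{v_{\hat\psi}^k}(\hat\psi;\psi-\hat\psi)=\sum_{\tau=0}^{k}\Bigl[\lambda_{\tau+1}\tfrac{\partial f_\tau}{\partial y}(x_\tau^{\hat\psi},\hat u_\tau)+\tfrac{\partial g_\tau}{\partial y}(x_\tau^{\hat\psi},\hat u_\tau)\Bigr](u_\tau-\hat u_\tau)^*,
\]
where the series arising from Lemma \ref{DV} is identified with $\lambda_{\tau+1}$ via Proposition \ref{mul} (this is the only place the TC enters). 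Each bracket vanishes by (\ref{MPY}), so Proposition \ref{SGD} yields $v(\hat\psi)=v_{\hat\psi}^k(\hat\psi)\ge v_{\hat\psi}^k(\psi)$ for every $k$. Finally the minorant in (c) gives $v_{\hat\psi}^k(\psi)\ge\sum_{t=0}^{k}g_t(x_t^\psi,u_t)+\sum_{t>k}m_t$, and letting $k\to\infty$ finishes. No telescoping and no boundary-term limit are required; the passage to infinity happens in the value function, not in the Gateaux differential.
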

	
	\begin{proof}
		For each $k\in\nat$, consider the function $v_{\hat{\psi}}^k:\Psi\to\rz$ given by $$v_{\hat\psi}^k(u_0,u_1,\dots)=v(u_0,\dots,u_{k},\hat u_{k+1},\hat u_{k+2},\dots),$$ where $v$ is the performance index (\ref{PI}).
		Proceeding as in the proof of Lemma \ref{DV}, we find
		\begin{align*}
		&\delta_{v_{\hat{\psi}}^k}(\hat\psi,\psi-\hat \psi)=\sum_{\tau=0}^k\left(\sum_{t=\tau+1}^\infty\left[ \frac{\partial g_t}{\partial x}(x_t^{\hat\psi},\hat u_t)\prod_{s=\tau+1}^{t-1}\frac{\partial f_s}{\partial x}(x_s^{\hat\psi},\hat u_s)\right]\frac{\partial f_\tau}{\partial y}(x_\tau^{\hat\psi},\hat u_\tau) +\frac{\partial g_\tau}{\partial y}(x_\tau^{\hat\psi},\hat u_\tau)\right)(u_\tau-\hat u_\tau)^*.
		\end{align*}
		By Proposition \ref{mul} and (\ref{MPY}), we have $\delta_{v_{\hat{\psi}}^k}(\hat\psi,\psi)=0$, which by Proposition \ref{SGD} yields that $\hat{\psi}$ is a maximum of $v^k_{\hat{\psi}}$. Let $\psi\in\Psi$ be any plan. By Assumption \ref{SuffCon}(c)
		\begin{align*}
		v(\hat{\psi})&=v_{\hat{\psi}}^k(\hat \psi)\\
		&\ge v_{\hat{\psi}}^k(\psi)\\
		&\ge \sum_{t=0}^k g_t(x_t^\psi,u_t) + \sum_{t=k+1}^\infty m_t.
		\end{align*}
		Letting $k\to\infty$, we obtain $v(\hat \psi)\ge v(\psi).$
	\end{proof}
	
	\subsection{Examples} 
	\begin{examp}[A consumption-investment problem]
		Let $\beta,\gamma\in(0,1)$ and $r>0$ such that  $\displaystyle (r\beta)^{\frac{1}{\gamma}}<r$. Assume that $x_t$ is the wealth of certain investor at time $t\in\nat$. At each time $t=0,1,\dots$, the investor consumes a fraction $u_t\in(0,1)$ of the assets. Suppose that the investor wishes to maximize the following discounted utility of consumption
		
		\begin{align*}
		\sum_{t=0}^\infty \beta^t(x_tu_t)^{1-\gamma},
		\end{align*}\
		
		subject to the dynamics of the assets
		
		$$x_{t+1}=r(1-u_t)x_t,$$ where $x_0>0$ is given.
		
		In the present context, our control model in Section \ref{olmodel} has the following components
		
		\begin{itemize}
			\item state space $X_t\equiv X:=(0,\infty)$;
			\item control space $U_t\equiv U:=(0,1)$;
			\item system functions $f_t:X\times U\to X$ with $f_t(x,u):=r(1-u)x$;
			\item return functions $g_t:X\times U\to \rz$ with $g_t(x,u):=\beta^t (xu)^{1-\gamma}$.
		\end{itemize}

		To use Theorem \ref{MP}, we proceed as follows. From MP (\ref{MPX})-(\ref{MPY}):

		\begin{align}
		\lambda_t=\beta^t(1-\gamma)(x_t^{\hat\psi}\hat u_t)^{-\gamma}\hat u_t+\lambda_{t+1}r(1-\hat u_t) \hspace{0.3cm}\forall_{t\in\natu},
		\end{align}
		
		\begin{align}
		0=\beta^t(1-\gamma)(x_t^{\hat \psi} \hat u_t)^{-\gamma}-\lambda_{t+1}r \hspace{0.3cm}\forall_{t\in\nat}.
		\end{align} 
		
		Combining these equations we obtain
		$\lambda_{t}=\beta^t(1-\gamma)(x_t^{\hat\psi}\hat u_t)^{-\gamma}$ and 
		\begin{align}\label{lambdasol}
		1/r= \frac{\lambda_{t+1}}{\lambda_t}=\beta\left( \frac{x_{t+1}^{\hat\psi} \hat u_{t+1}}{x_t^{\hat\psi} \hat u_t} \right)^{-\gamma}.
		\end{align}
	Using the fact that $x_{t+1}^{\hat\psi}=r(1-\hat u_t)x_t^{\hat\psi}$, it follows that
	\begin{align*}
	(r\beta)^{\frac{1}{\gamma}}=\frac{x_{t}^{\hat\psi}\hat u_{t}}{x_{t-1}^{\hat\psi}\hat u_{t-1}}=\frac{rx_t^{\hat\psi}-x_{t+1}^{\hat\psi}}{rx_{t-1}^{\hat\psi}-x_t^{\hat\psi}},
	\end{align*}
	which can be written as the second-order difference equation
	\begin{align}\label{eqicp}
	x_{t+1}^{\hat\psi}-[(r\beta)^{\frac{1}{\gamma}}+r]x_t^{\hat\psi}+r(r\beta)^{\frac{1}{\gamma}}=0.
	\end{align}
	The general solution of (\ref{eqicp}) is 
	\begin{align*}
	x_t^{\hat\psi}=c_1(r\beta)^{\frac{t}{\gamma}}+c_2r^t,
	\end{align*}
	for some constants $c_1$ and $c_2$.
	
	Observe that 
	\begin{align*}
	\lambda_t\prod_{s=1}^{t-1}r(1-\hat u_s)&=\lambda_t\prod_{s=1}^{t-1}r(1-\hat u_s)\frac{x_s^{\hat\psi}}{x_s^{\hat\psi}}\\
	&=\lambda_t\prod_{s=1}^{t-1}\frac{x_{s+1}^{\hat\psi}}{x_s^{\hat\psi}}\\
	&=\lambda_t\frac{x_t^{\hat\psi}}{x_1^{\hat\psi}}.
	\end{align*}
	Thus, the TC (\ref{TC}) can be seen as
	\begin{align}\label{tcicp}
	\lambda_tx_t^{\hat\psi}\to0.
	\end{align}
	From (\ref{lambdasol}), we have $\lambda_t=\lambda_1r^{1-t}$. Now
	
	\begin{align*}
	\lambda_tx_t^{\hat\psi}=\lambda_1c_1(r\beta)^{\frac{1}{\gamma}}\left[ \frac{(r\beta)^{\frac{1}{\gamma}}}{r}\right]^{t-1}+\lambda_1c_2r.
	\end{align*}
	Since  $\displaystyle (r\beta)^{\frac{1}{\gamma}}<r$, by (\ref{tcicp}) and the initial condition $x_0$, we conclude $c_2=0$ and $c_1=x_0$; thus $x_t^{\hat\psi}=x_0(r\beta)^{\frac{1}{\gamma}}$. Therefore $\hat u_t=1-\displaystyle\frac{(r\beta)^{\frac{1}{\gamma}}}{r}$ for all $t\in\nat$.
		
		We prove that Assumption \ref{AMP} holds. Let $\tau\in\nat$ and consider $\rho_t^\tau:[0,1]\to\rz$ as in the assumption. Observe that 
		\begin{align*}
		x_t^\psi&=r(1-u_{t-1})x^\psi_{t-1}\\
		&=r(1-u_{t-1})r(1-u_{t-2})x^\psi_{t-2}\\
		& \hspace{0.1cm}\vdots\\
		&=\prod_{s=\tau+1}^{t-1}r(1-u_s)x^\psi_{\tau+1}\\
		&=\prod_{s=\tau+1}^{t-1}\frac{\partial f_s}{\partial x}(x_s^\psi,u_s)x^\psi_{\tau+1},
		\end{align*}
		for any plan $\psi$. Since $u_t\in(0,1)$ for all $t\in\nat$, we have $|x_t^\psi|<r^t|x_0|$ for any plan $\psi$.
		
		Now, take $\mathcal O_\tau=(\eta',\eta)$ as a small neighborhood of $1-\displaystyle\frac{(r\beta)^{\frac{1}{\gamma}}}{r}$ properly contained in $(0,1)$, we have
		\begin{align*}
		\left|\rho_t^{\tau}(u)\right|&=\left|\frac{\partial g_t}{\partial x}( x_t^{{\hat\psi^\tau(u)}}, \hat u_t)\prod_{s=\tau+1}^{t-1}\frac{\partial f_s}{\partial x}(x_s^{{\hat \psi^\tau(u)}}, \hat u_s)\right|\\
		&=\left|\beta^tr(1-\gamma)(x_t^{\hat\psi(u)}\hat u_t)^{-\gamma}\hat u_t\frac{x_t^{\hat\psi(u)}}{x_{\tau+1}^{\hat\psi(u)}}\right|\\
		&=\left|\beta^t(1-\gamma)(x_t^{\hat\psi(u)}\hat u_t)^{1-\gamma}\frac{1}{(1-u)x_{\tau}^{\hat\psi}}\right|\\
		&<\frac{\left|x
			_0\right|^{1-\gamma}}{(1-\eta) x_{\tau}^{\hat\psi}}(\beta r^{1-\gamma})^t.
		\end{align*}
		Since $\displaystyle\frac{(r\beta)^{\frac{1}{\gamma}}}{r}<1$ implies $\beta r^{1-\gamma}<1$, we have by the Weierstrass M-test that  $\displaystyle\sum_{t=\tau+1}^{\infty}\rho_t^\tau$ converges uniformly on $\mathcal O_\tau$.
	\end{examp}

	\begin{examp}[A linear regulator problem]\label{LQ}
		An OCP with linear system equation and a quadratic cost function is known as a \textit{LQ problem} (also called a  \textit{linear regulator problem}). LQ problems have been widely studied. See, for instance, Chapter 5 of \cite{Ljungq}.
		We consider a particular deterministic scalar case. The state of the system evolves according to 
		\begin{align}\label{lqdy}
		x_{t+1}=x_t+u_t,
		\end{align}
		for $t\in\nat$. The performance index is
		\begin{align}\label{lqpi}
		\sum_{t=0}^\infty\beta^t\left[ \frac{1}{2}x_t^2+\frac{1}{2}u_t^2\right],
		\end{align}
		where $0<\beta<1$.
		Given $x_0\in\mathbb R$, we want to minimize (\ref{lqpi}) subject to (\ref{lqdy}).

		In the present context, our control model in Section \ref{olmodel} has the following components
		
		\begin{itemize}
			\item state space $X_t\equiv X:=\mathbb R$;
			\item control space $U_t\equiv U:=\mathbb R$;
			\item system functions $f_t:X\times U\to X$ with $f_t(x,u):=x+u$;
			\item cost functions $g_t:X\times U\to \rz$ with $g_t(x,u):=\frac{\beta^t}{2}\left[x^2+u^2 \right] $.
		\end{itemize}

		Considering (\ref{MPX})-(\ref{MPY}) of Theorem \ref{MP}:

		\begin{align}\label{eq1icp}
		\lambda_t=\beta^tx_t^{\hat{\psi}}+\lambda_{t+1} \hspace{0.3cm} \forall_{t\in\natu},
		\end{align}
		\begin{align}
		0=\beta^t \hat u_t+\lambda_{t+1} \hspace{0.3cm} \forall_{t\in\nat}.
		\end{align} 
		
		From these equations, we obtain 
		\begin{align*}
		\beta^tx^{\hat{\psi}}_t&=\lambda_t-\lambda_{t+1}\\
		&=-\beta^{t-1}\hat u_{t-1}+\beta^t\hat u_t\\
		&=-\beta^{t-1}(x_t^{\hat{\psi}}-x^{\hat{\psi}}_{t-1})+\beta^t(x^{\hat{\psi}}_{t+1}-x^{\hat{\psi}}_{t}),
		\end{align*}
		which is equivalent to the difference equation
		\begin{align}\label{LQEQ}
		\beta x^{\hat{\psi}}_{t+1}-(1+2\beta)x^{\hat{\psi}}_t+x^{\hat{\psi}}_{t-1}=0.
		\end{align}
		The solution of (\ref{LQEQ}) is $x^{\hat{\psi}}_t=k_1r_1^t+k_2r_2^t$ for some constants $k_1$ and $k_2$; where $r_1$ and $r_2$ are the roots of equation $\beta x^2-(1+2\beta)x+1=0$.
		The transversality condition $(\ref{TC})$ reduces to
		\begin{align*}
		\lambda_t\to0.
		\end{align*}
		From this fact and  $(\ref{eq1icp})$, we conclude that $\beta^t x_t^{\hat{\psi}}\to 0$. Now, 
		\begin{align*}
		\beta^tx^{\hat{\psi}}_t&=k_1(\beta r_1)^t+k_1(\beta r_2)^t.
		\end{align*}
		Since $\beta r_1>1 $ and $\beta r_2<1$, we conclude that  $k_1=0$, and by the initial condition, $k_2=x_0$.
		So $$\displaystyle\hat u_t=x_0r_2^{t+1}-x_0r_2^t=(r_2-1)r_2^tx_0.$$

		To prove that Assumption \ref{AMP} holds, let $\tau\in\nat$ and consider $\rho_t^\tau:\rz\to\rz$ as in the assumption. Take $\mathcal O_\tau=(-x_0,x_0)$ , then we have
		\begin{align*}
		\left|\rho_{t}^{\tau}(u)\right|&=\left|\frac{\partial g_t}{\partial x}( x_t^{{\hat\psi^\tau(u)}}, \hat u_t)\prod_{s=\tau+1}^{t-1}\frac{\partial f_s}{\partial x}(x_s^{{\hat \psi^\tau(u)}}, \hat u_s)\right|\\
		&=\left|\beta^tx_t^{\hat\psi^\tau(u)}\right|\\
		&=\beta^t\left|x_\tau^{\hat\psi}+u+\sum_{s=\tau+1}^{t-1}(r_2-1)x_0r_2^t\right|\\
		&<\left|x_0\right||t-\tau+1|\beta^t.
		\end{align*}
		Thus, by the Weierstrass M-test, $\displaystyle\sum_{t=\tau+1}^{\infty}\rho^\tau_t$ converges uniformly on $\mathcal O_\tau$.
	\end{examp}
	
	\subsection{Proof of Lemma \ref{DV}}\label{secproo}
	
	For the reader's conveniencek, we restate here Lemma \ref{DV}.
	\\\\
	\textbf{Lemma \ref{DV}.}
	\textit{Let  $\hat \psi=\left\lbrace \hat u_t\right\rbrace \in \Psi$ be a plan for which Assumption \ref{AMP} holds. Let $y\in\rzm$ and $\tau\in\nat$. Then $\hat\psi$ is an internal point of $\Psi$ in the direction $\psi^{\tau,y}\in\Lambda$, where $\psi^{\tau,y}$ is defined as 
		\begin{align*}
		\psi^{\tau,y}_t:=\left\{ \begin{array}{lcc}
		\displaystyle y &   if  & t=\tau \\
		\\ 0 &  if & t\neq\tau, \\
		\end{array}
		\right.
		\end{align*}
		for all $t\in\nat$. Moreover, if $v$ is the function in (\ref{PI}), its G\^ ateaux differential at $\hat\psi$ in the direction $\psi^{\tau,y}$ exists and is given by 
		\begin{align*}
		\delta_v(\hat\psi;\psi^{\tau,y})= \left( \sum_{t=\tau+1}^\infty\frac{\partial g_t}{\partial x}(x_t^{\hat{\psi}},\hat u_t)\prod_{s=\tau+1}^{t-1}\frac{\partial f_s}{\partial x}(x_s^{\hat{\psi}},\hat u_s)\right)\frac{\partial f_\tau}{\partial y}(x_\tau^{\hat{\psi}},\hat u_\tau)y^*+\frac{\partial g_\tau}{\partial y}(x_\tau^{\hat{\psi}},\hat u_\tau)y^*.
		\end{align*} }
	
	\begin{proof}
		First, we prove that $\hat\psi$ is an internal point in the direction $\psi^{\tau,y}$.
		If $t=\tau$; there exists $\varepsilon_\tau>0$ such that $\hat u_\tau+\varepsilon y\in U_\tau$ for all $\varepsilon\in(-\varepsilon_\tau,\varepsilon_\tau)$, since, by assumption, $\hat u_\tau$ belongs to an open neighborhood $\mathcal O_\tau\subset U_\tau$.  So  $\hat\psi+\varepsilon\psi^{\tau,y}$ is in $\Psi$ for all $\varepsilon\in(-\varepsilon_\tau,\varepsilon_\tau)$. To prove the assertion about the Gateaux differential, we compute the derivatives of the functions in (a)-(c) below.
		\begin{enumerate}
			\item[(a)] For each $t\ge\tau$, define $h_t:(-\varepsilon_\tau,\varepsilon_\tau)\to \rzn$ as $h_t(\varepsilon):=f_t(x_t^{\hat{\psi}+\varepsilon\psi^{\tau,y}},\hat u_t+\varepsilon\psi_t^{\tau,y})$.
			We prove by induction that
			\begin{align*}
			h'_t(\varepsilon)=\displaystyle\left[ \prod_{s=\tau+1}^t\frac{\partial f_s}{\partial x}(x_s^{\hat{\psi}+\varepsilon\psi^{\tau,y}},\hat u_s)\right] \frac{\partial f_\tau}{\partial y}(x_\tau^{\hat{\psi}+\varepsilon\psi^{\tau,y}},\hat u_\tau+\varepsilon y)y^*.
			\end{align*}
			If $t=\tau$,
			\begin{align*}
			h'_{\tau}(\varepsilon)=\frac{\partial f_{\tau}}{\partial y}(x_t^{\hat\psi},\hat u_{\tau}+\varepsilon y)y^*.
			\end{align*}
			Suppose that it holds for $t$. Then, by the chain rule,
			\begin{align*}
			h'_{t+1}(\varepsilon)&=[ f_{t+1}(x_{t+1}^{\hat{\psi}+\varepsilon\psi^{\tau,y}},\hat u_{t+1}+\varepsilon\psi^{\tau,y}_{t+1})]'\\
			&=\left[ f_{t+1}(h_t(\varepsilon),\hat u_{t+1}+\varepsilon\psi^{\tau,y}_{t+1})\right]'\\
			&=\frac{\partial f_{t+1}}{\partial x}(h_t(\varepsilon),\hat u_{t+1}+\varepsilon\psi^{\tau,y}_{t+1})h_t'(\varepsilon)+\frac{\partial f_{t+1}}{\partial y}(h_t(\varepsilon),\hat u_{t+1}+\varepsilon\psi^{\tau,y}_{t+1})[\psi^{\tau,y}_{t+1}]^*\\
			&=\displaystyle\left[ \prod_{s=\tau+1}^{t+1}\frac{\partial f_s}{\partial x}(x_s^{\hat{\psi}+\varepsilon\psi^{\tau,y}},\hat u_s+\varepsilon\psi^{\tau,y}_s)\right] \frac{\partial f_{\tau}}{\partial y}(x_\tau^{\hat{\psi}+\varepsilon\psi^{\tau,y}},\hat u_\tau+\varepsilon y)y^*.
			\end{align*}
			\item[(b)] Define, for $t>\tau$, $k_t:(-\varepsilon_\tau,\varepsilon_\tau)\to\rz$ as
			\begin{align*}
			k_t(\varepsilon):=g_t(x_t^{\hat{\psi}+\varepsilon\psi^{\tau,y}},\hat u_t+\varepsilon\psi^{\tau,y}_t).
			\end{align*}
			For $t>\tau$, we have
			\begin{align*}
			k_t'(\varepsilon)&=[g_t(x_t^{\hat{\psi}+\varepsilon\psi^{\tau,y}},\hat u_t+\varepsilon\psi^{\tau,y}_t)]'\\
			&=[g_t(h_{t-1}(\varepsilon),\hat u_t+\varepsilon\psi^{\tau,y}_t)]'\\
			&=\frac{\partial g_t}{\partial x}(x_t^{\hat{\psi}+\varepsilon\psi^{\tau,y}},\hat u_t+\varepsilon\psi^{\tau,y}_t)h_{t-1}'(\varepsilon)+\frac{\partial g_t}{\partial y}(x_t^{\hat{\psi}+\varepsilon\psi^{\tau,y}},\hat u_t+\varepsilon\psi^{\tau,y}_t)[\psi^{\tau,y}_{t}]^*\\
			&=\frac{\partial g_t}{\partial x}(x_t^{\hat{\psi}+\varepsilon\psi^{\tau,y}},\hat u_t)\displaystyle\left[ \prod_{s=\tau+1}^{t-1}\frac{\partial f_s}{\partial x}(x_s^{\hat{\psi}+\varepsilon\psi^{\tau,y}},\hat u_s)\right] \frac{\partial f_\tau}{\partial y}(x_\tau^{\hat{\psi}+\varepsilon\psi^{\tau,y}},\hat u_\tau+\varepsilon y)y^*
			\end{align*}
			\item[(c)] For each $T\in\nat$, define $l_T:\mathcal (-\varepsilon_\tau,\varepsilon_\tau)\to\mathbb R$ as 
			\begin{align*}
			l_T(\varepsilon):=\sum_{t=0}^T g_t(x_t^{\hat{\psi}+\varepsilon\psi^{\tau,y}},\hat u_t+\varepsilon \psi_t^{\tau,y})
			\end{align*}
			For $T>\tau$, we have 
			\begin{align*}
			l'_T(\varepsilon)&=\left[ \sum_{t=0}^T g_t(x_t^{\hat{\psi}+\varepsilon\psi^{\tau,y}},\hat u_t+\varepsilon \psi_t^{\tau,y}) \right]'\\
			&=\left[ \sum_{t=0}^{\tau-1} g_t(x_t^{\hat{\psi}},\hat u_t) \right]'+[ g_\tau(x_\tau^{\hat{\psi}},\hat u_\tau+\varepsilon y)]'+\left[ \sum_{t=\tau+1}^T g_t(x_t^{\hat{\psi}+\varepsilon\psi^{\tau,y}},\hat u_t) \right]'\\
			&= \frac{\partial g_\tau}{\partial y}(x_\tau^{\hat{\psi}+\varepsilon\psi^{\tau,y}},\hat u_\tau+\varepsilon y)y^*+ \sum_{t=\tau+1}^T k_t'(\varepsilon)\\
			&= \sum_{t=\tau+1}^T\left[\frac{\partial g_t}{\partial x}(x_t^{\hat{\psi}+\varepsilon\psi^{\tau,y}},\hat u_t)\displaystyle\left( \prod_{s=\tau+1}^{t-1}\frac{\partial f_s}{\partial x}(x_s^{\hat{\psi}+\varepsilon\psi^{\tau,y}},\hat u_s)\right) \frac{\partial f_\tau}{\partial y}(x_\tau^{\hat{\psi}+\varepsilon\psi^{\tau,y}},\hat u_\tau+\varepsilon y)y^*\right]\\
			&\hspace{0.5cm}+ \frac{\partial g_\tau}{\partial y}(x_\tau^{\hat{\psi}+\varepsilon\psi^{\tau,y}},\hat u_\tau+\varepsilon y)y^*.
			\end{align*}
			
		\end{enumerate}
		Finally, from (a), (b) and (c), we obtain
		\begin{align*}
		\delta_v(\hat{\psi};\psi^{\tau,y})&=\left.\frac{d}{d\varepsilon}\left[ v(\hat\psi+\varepsilon\psi^{\tau,y})\right]\right|_{\varepsilon=0}\\
		&=\frac{d}{d\varepsilon}\left.\left[ \lim_{T\to\infty}l_T(\varepsilon)\right]\right|_{\varepsilon=0}\\
		&=\left.\lim_{T\to\infty}\left[ l'_T(\varepsilon)\right]\right|_{\varepsilon=0}\\
		&= \sum_{t=\tau+1}^\infty\left[\frac{\partial g_t}{\partial x}(x_t^{\hat{\psi}},\hat u_t)\displaystyle\left( \prod_{s=\tau+1}^{t-1}\frac{\partial f_s}{\partial x}(x_s^{\hat{\psi}},\hat u_s)\right) \frac{\partial f_\tau}{\partial y}(x_\tau^{\hat{\psi}},\hat u_\tau)y^*\right]+ \frac{\partial g_\tau}{\partial y}(x_\tau^{\hat{\psi}},\hat u_\tau)y^*\\
		&= \left( \sum_{t=\tau+1}^\infty\frac{\partial g_t}{\partial x}(x_t^{\hat{\psi}},\hat u_t)\prod_{s=\tau+1}^{t-1}\frac{\partial f_s}{\partial x}(x_s^{\hat{\psi}},\hat u_s)\right)\frac{\partial f_\tau}{\partial y}(x_\tau^{\hat{\psi}},\hat u_\tau)y^*+\frac{\partial g_\tau}{\partial y}(x_\tau^{\hat{\psi}},\hat u_\tau)y^* .
		\end{align*}
		Assumption \ref{AMP}  ensures the interchange between the limit and the derivative.
	\end{proof}

	\section{Variants of The Maximum Principle}

	\subsection{Finite Horizon Optimal Control Problems}

	In this subsection we consider again the non-stationary OCP (\ref{dynamic})-(\ref{tuple}), except that the performance index (\ref{Performanceindex}) is now replaced by the finite-horizon function
	\begin{align}\label{Performanceindexfh}
	\sum_{t=0}^{T-1}g_t(x_t,u_t)+g_T(x_T).
	\end{align}
	In particular, the dynamic control model is as in (\ref{dynamic}), that is
	\begin{align}\label{dynamicfh}
	x_{t+1}=f_t(x_t,u_t),
	\end{align}
	for $t\in\left\lbrace 0,\dots, T-1\right\rbrace$, with a given initial condition $x_0$.

	As before, given a plan $\psi=(u_0,\dots,u_{T-1}) $, we denote by $\left\lbrace x_t^\psi \right\rbrace $ the sequence induced by $\psi$ in (\ref{dynamicfh}), i.e., 
	\begin{align}
	&x_0^\psi=x_0\\
	&x_{t+1}^\psi=f_t(x_t^\psi,u_t).
	\end{align}

	In this optimal control model, we want to find a plan $\psi$, also called a control policy, that maximizes the performance index (\ref{Performanceindexfh}) subject to (\ref{dynamicfh}).
	
	In compact form, the optimal control model can be described by the triplet.
	\begin{align}\label{tuplefh}
	(\Psi_T(x_0), \left\lbrace f_t\right\rbrace, \left\lbrace g_t\right\rbrace),
	\end{align} 
	with components as above. 
	
	The following assumption is supposed to hold throughout the remainder of this subsection.

	\begin{assu}\label{WellPosedfh}
		The triplet in (\ref{tuplefh}) satisfies the following for each $x_0\in X_0$:
		\begin{enumerate}
			\item[(a)] the set $\Psi_T(x_0)$ is nonempty;

			\item[(b)] for each $t\in\left\lbrace 0,\dots, T-1\right\rbrace $, $f_t$ and $g_t$ are differentiable in the interior of $X_t\times U_t$ and $g_T$ in the interior of $X_T$.
		\end{enumerate}
	\end{assu}
	Throughout the following we fix the initial state $x_0$. Define $v_T:\Psi_T(x_0)\to\mathbb R$ by
	\begin{align}\label{PIfh}
	v_T(\psi)=\sum_{t=0}^{T-1} g_t(x_t^\psi,u_t) + g_T(x_T^\psi).
	\end{align}
	Given (\ref{tuplefh}) and $x_0\in X_0$, The OCP is to find $\hat\psi\in\Psi_T(x_0)$ such that 
	\begin{align*}
	v_T(\hat{\psi})\ge v_T(\psi),
	\end{align*} 
	for all $\psi\in\Psi_T(x_0)$. In this case, we say that $\hat{\psi}$ is an optimal plan.

	The following theorem is a consequence of Theorem \ref{MP} and Corollary \ref{mulunique}.  Observe that the TC (\ref{TC}) reduces to the terminal condition (\ref{TCfh}).
	
	\begin{thm}\label{MPfh}
		Let $\hat \psi=(\hat u_0,\dots, \hat u_{T-1})\in\Psi_T(x_0)$ be a plan such that each $\hat u_t$ is in the interior of $U_t$. If $\hat\psi$ is an optimal plan of the control model (\ref{Performanceindexfh})-(\ref{tuplefh}), then there exist unique $\lambda_1,\dots,\lambda_T$ in $\rzn$ such that
		\begin{enumerate}
			\item[(a)] For all $t\in\left\lbrace 1,\dots, T-1 \right\rbrace $,
			\begin{align}\label{MPXfh}
			\frac{\partial g_t}{\partial x}(x_t^{\hat\psi},\hat u_t)+\lambda_{t+1}\frac{\partial f_t}{\partial x}(x_t^{\hat\psi},\hat u_t)=\lambda_t,
			\end{align}
			\item[(b)] For all $t\in\left\lbrace 0,\dots, T-1 \right\rbrace$,
			\begin{align}\label{MPYfh}
			\frac{\partial g_t}{\partial y}(x_t^{\hat\psi},\hat u_t)+\lambda_{t+1}\frac{\partial f_t}{\partial y}(x_t^{\hat\psi},\hat u_t)=0,
			\end{align}
			\item[(c)] 
			\begin{align}\label{TCfh}
			\lambda_T=\frac{\partial g_T}{\partial x}(x_T^{\hat{\psi}}).
			\end{align}  
		\end{enumerate}
		Moreover, each $\lambda_t$ is given by 
		\begin{align}\label{multipliersfh}
		\lambda_t=\sum_{k=t}^T\frac{\partial g_k}{\partial x}(x_k^{\hat{\psi}},\hat u_k)\prod_{s=t}^{k-1}\frac{\partial f_s}{\partial x}(x_s^{\hat{\psi}},\hat u_s).
		\end{align}
		
	\end{thm}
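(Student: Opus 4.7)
The plan is to embed the finite-horizon problem into the infinite-horizon framework of Section \ref{MPSec} and then invoke Theorem \ref{MP} together with Corollary \ref{mulunique}. I would extend the data as follows: keep $f_t$, $g_t$, $X_t$, $U_t$ unchanged for $t \in \{0,\ldots,T-1\}$; at stage $T$, set $\tilde g_T(x,u) := g_T(x)$ (treating it as a function that ignores its second argument); for $t \geq T$, set $\tilde U_t := \rzm$ and $\tilde f_t(x,u) := x$; and for $t > T$, set $\tilde g_t \equiv 0$. Extend $\hat\psi$ to $\tilde\psi := (\hat u_0,\ldots,\hat u_{T-1},0,0,\ldots)$. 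Since $\tilde f_t$ and $\tilde g_t$ are trivial for $t > T$ and $\tilde g_T$ does not depend on the control, the infinite-horizon value of any extension of $\hat\psi$ equals $v_T(\hat\psi)$, so $\tilde\psi$ is optimal for the extended infinite-horizon problem, and Assumption \ref{WellPosed} is immediate.

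Next, I would verify Assumption \ref{AMP} for $\tilde\psi$. For $\tau \geq T$ the factor $\partial \tilde g_t/\partial x$ vanishes for $t > T$, while at $t=T$ the product over $s \in \{\tau+1,\ldots,T-1\}$ is empty, so $\rho_t^\tau$ vanishes identically; for $\tau < T$, only the finitely many indices $t \in \{\tau+1,\ldots,T\}$ contribute to the defining series, so uniform convergence on any open neighborhood of $\hat u_\tau$ contained in the interior of $U_\tau$ is automatic.

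I then apply Theorem \ref{MP} to the extended problem, obtaining a sequence $\{\lambda_t\}_{t\geq 1}$ satisfying (\ref{MPX})--(\ref{MPY})--(\ref{TC}). The explicit formula (\ref{multipliers}) shows $\lambda_t = 0$ for $t > T$, while at $t = T$ only the term $k=T$ survives (the product from $s=T$ to $T-1$ being the identity), giving $\lambda_T = \partial g_T/\partial x(x_T^{\hat\psi})$, which is (\ref{TCfh}). The adjoint equations (\ref{MPXfh}) and (\ref{MPYfh}) are the restrictions of (\ref{MPX}) and (\ref{MPY}) to the relevant ranges of $t$, and (\ref{multipliersfh}) is the truncation of (\ref{multipliers}), the tail vanishing because $\partial \tilde g_k/\partial x = 0$ for $k > T$. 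Uniqueness is inherited from Corollary \ref{mulunique} applied to the extended problem, or equivalently, it can be read off by solving (\ref{MPXfh}) backward from the terminal value $\lambda_T$ supplied by (\ref{TCfh}).

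There is no real obstacle here; the only care needed is in designing the extension so that Assumption \ref{AMP} is automatic and the interior-point hypothesis of Theorem \ref{MP} is preserved at every stage, after which the reduction is essentially mechanical.
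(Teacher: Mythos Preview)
Your proposal is correct and follows essentially the same approach as the paper: embed the finite-horizon problem into the infinite-horizon framework by making $g_T$ depend only on the state and setting $g_t\equiv 0$ for $t\ge T+1$, then invoke Theorem~\ref{MP} and Corollary~\ref{mulunique}. You have simply supplied the details (the explicit extension of $f_t$, $U_t$, the verification of Assumption~\ref{AMP}, and the reduction of (\ref{multipliers}) to (\ref{multipliersfh})) that the paper's two-sentence proof leaves implicit.
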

	
	\begin{proof}
		It suffices to consider the special case of Theorem \ref{MP} in which $g_T$ only depends of the first variable and $g_t\equiv0$ for all $t\ge T+1$, and then apply Corollary \ref{mulunique}.
	\end{proof}

	To establish sufficient conditions, we need the following assumption.
	\begin{assu}
		The control model (\ref{tuplefh}) satisfies the following:
		\begin{enumerate}\label{SuffConfh}
			\item[(a)]  the set of plans $\Psi_T(x_0)$ is convex;	\item[(b)]	the performance index $v_T$ (\ref{PIfh}) is concave.
		\end{enumerate}
	\end{assu}

	The next theorem is consequence of Theorem \ref{SufThm}.

	\begin{thm}\label{SufThmfh}
		Suppose that a plan $\hat{\psi}\in\Psi_T(x_0)$ satisfies (\ref{MPXfh})-(\ref{TCfh}). If Assumption \ref{SuffConfh} holds, then $\hat\psi$ is an optimal plan for the control model (\ref{dynamicfh})-(\ref{tuplefh}).
	\end{thm}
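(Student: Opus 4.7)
The plan is to reduce the finite-horizon problem to the infinite-horizon setting and invoke Theorem~\ref{SufThm}, analogously to how the proof of Theorem~\ref{MPfh} reduces to Theorem~\ref{MP}. First I would extend the model to an infinite-horizon one by setting $\tilde f_t(x,u):=x$ and $\tilde g_t\equiv 0$ for every $t\ge T+1$, and $\tilde g_T(x,u):=g_T(x)$ (depending only on the state). Fixing an arbitrary $\bar u_t\in U_t$ for each $t\ge T$, each finite-horizon plan $\psi=(u_0,\dots,u_{T-1})$ embeds into the infinite-horizon plan $\tilde\psi=(u_0,\dots,u_{T-1},\bar u_T,\bar u_{T+1},\dots)$, and the extended performance index $\tilde v$ then satisfies $\tilde v(\tilde\psi)=v_T(\psi)$.

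Next I would verify the hypotheses of Theorem~\ref{SufThm} for $\tilde{\hat\psi}$ in the extended model. Assumption~\ref{AMP} is immediate because $\partial \tilde g_t/\partial x\equiv 0$ for $t>T$, so the series $\sum_{t=\tau+1}^\infty \tilde\rho_t^\tau$ truncates to a finite sum and converges uniformly on any neighbourhood. The multipliers $\tilde\lambda_t$ produced by the explicit formula (\ref{multipliers}) vanish for $t>T$, making (\ref{TC}) trivial, and coincide with the $\lambda_t$ of (\ref{multipliersfh}) for $t\le T$ by Proposition~\ref{mul}. Consequently (\ref{MPX})--(\ref{MPY}) in the extended problem reduce, term by term, to (\ref{MPXfh})--(\ref{MPYfh}) for $t<T$, to the terminal identity (\ref{TCfh}) for $t=T$, and to trivial identities for $t>T$. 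Assumption~\ref{SuffCon}(a),(b) are inherited from Assumption~\ref{SuffConfh}: convexity of $\Psi_T(x_0)$ transfers to the product embedding and concavity of $v_T$ carries over to $\tilde v$. For (c), I would take $m_t=0$ for $t>T$ together with suitable lower bounds for the finitely many $g_t$, $t\le T$, along trajectories induced by $\Psi_T(x_0)$. Theorem~\ref{SufThm} then gives that $\tilde{\hat\psi}$ maximizes $\tilde v$, which under the embedding means that $\hat\psi$ maximizes $v_T$ over $\Psi_T(x_0)$.

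The main technical obstacle is the verification of Assumption~\ref{SuffCon}(c): the finite-horizon hypothesis (Assumption~\ref{SuffConfh}) does not include any explicit pointwise lower bound on the $g_t$, and such bounds need not hold automatically along the feasible trajectories. If this step is problematic, a cleaner alternative is a direct proof that mirrors Theorem~\ref{SufThm}: compute $\delta_{v_T}(\hat\psi,\psi-\hat\psi)$ via a finite-horizon analogue of Lemma~\ref{DV}, observe using (\ref{MPYfh}) and (\ref{multipliersfh}) that it vanishes for every $\psi\in\Psi_T(x_0)$, and conclude by Proposition~\ref{SGD}. Since $v_T$ is already a finite sum, no tail estimate is needed and the analogue of Assumption~\ref{SuffCon}(c) becomes unnecessary.
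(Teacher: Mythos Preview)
Your proposal is correct and follows essentially the same route as the paper, which simply invokes Theorem~\ref{SufThm} after setting $g_T$ to depend only on the state and $g_t\equiv 0$ for $t\ge T+1$. Your observation about Assumption~\ref{SuffCon}(c) is well taken---the paper's one-line proof glosses over this point, and your alternative direct argument via Proposition~\ref{SGD} (where the finite sum makes the tail estimate unnecessary) is the cleaner way to close that gap.
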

	
	\begin{proof}
		Considering the case when $g_T$ only depends of the first variable and $g_t\equiv0$ for all $t\ge T+1$, Theorem \ref{SufThm} yields the result.
	\end{proof}

	\subsection{Markov Strategies}\label{Markovmodel}

	In this subsection, we present a similar model to (\ref{dynamic})-(\ref{tuple}), but we consider that the control set and the policies may depend of the state.
	
	As usual, let $X\subset\rzn$ be the state space and $U\subset\rzm$ be the control set. Consider a sequence $\left\lbrace X_t\hspace{0.1cm}|\hspace{0.1cm} t\in\nat\right\rbrace$ of nonempty subsets of $X$, and  $\left\lbrace U_t(x)\hspace{0.1cm}|\hspace{0.1cm} x\in X_t,t\in\nat\right\rbrace $ the family of feasible control sets. For each $t\in\nat$, we define $$\mathbb K_t=\left\lbrace (x,u)\hspace{0.1cm}|\hspace{0.1cm} x\in X_t, u\in U_t(x) \right\rbrace. $$ For each $t\in\nat$, $x\in X_t$, and $u\in U_t(x)$. We denote by $f_t(x,u)$ the corresponding state in $X_{t+1}$,  where, for each $t\in\nat$, $f_t:\mathbb K_t\to X_{t+1}$ is a given function.
	
	A sequence $\varphi=\left\lbrace \varphi_t\right\rbrace $ of functions $\varphi_t:X_t\to \rzm$ is called a Markovian strategy whenever $\varphi_t(x)\in U_t(x)$ for all $x\in X_t$, $t\in\nat$. We denote the set of Markovian strategies from $x_0$ as $\varPhi(x_0)$.
	Given a Markovian strategy $\varphi=\left\lbrace \varphi_t \right\rbrace $, we denote by $\left\lbrace x_t^\varphi \right\rbrace $ the state sequence induced by $\varphi$, i.e., 
	\begin{align}\label{dynamicms}
	&x_0^\varphi=x_0\\
	&x_{t+1}^\varphi=f_t(x_t^\varphi,\varphi_t(x_t^\varphi))\hspace{0.3cm}\forall_{t\in\nat}.\label{dynamicms2}
	\end{align}

	We want to optimize 
	\begin{align}\label{Performanceindexms}
	\sum_{t=0}^{\infty}g_t(x_t^\varphi,\varphi_t(x_t^\varphi)),
	\end{align}
	where $g_t:\mathbb K_t\to\rz$ for each $t\in\nat$.  That is, we want to find a Markovian strategy $\varphi\in\varPhi(x_0)$ that maximizes the performance index (\ref{Performanceindexms}). 
	
	In reduced form, the optimal control model can be described by the triplet.
	\begin{align}\label{tuplems}
	(\varPhi(x_0), \left\lbrace f_t\right\rbrace, \left\lbrace g_t\right\rbrace),
	\end{align} 
	with components as above. 
	
	For the OCP to be well defined, the following assumption is supposed to hold throughout the remainder of the subsection.

	\begin{assu}\label{WellPosedms}
		The triplet in (\ref{tuplems}) satisfies the following for each $x_0\in X_0$:
		\begin{enumerate}
			\item[(a)] the set $\varPhi(x_0)$ is nonempty;
			
			\item[(b)] for each $\varphi=(\varphi_0,\varphi_1,\dots)\in\varPhi(x_0)$, 
			\begin{align*}
			\sum_{t=0}^\infty g_t(x_t^\varphi,\varphi_t(x_t^\varphi))<\infty;
			\end{align*}
			\item[(c)] there exists  $\varphi=(\varphi_0,\varphi_1,\dots)\in\varPhi(x_0)$ such that 
			\begin{align*}
			\sum_{t=0}^\infty g_t(x_t^\varphi,\varphi_t(x_t^\varphi))>-\infty;
			\end{align*}
			\item[(d)] for each $t\in\nat$, $f_t$ and $g_t$ are differentiable in the interior of $\mathbb K_t$.
		\end{enumerate}
	\end{assu}
	For $x_0\in X_0$, define the performance index $v:\varPhi(x_0)\to\mathbb R\cup\left\lbrace -\infty\right\rbrace $ by
	\begin{align}\label{PIms}
	v(\varphi)=\sum_{t=0}^\infty g_t(x_t^\varphi,\varphi_t(x_t^\varphi)).
	\end{align}
	Assumption \ref{WellPosedms}(a)-(b) ensures that the function $v$  is well defined. For the triplet (\ref{tuplems}) and $x_0\in X_0$, the OCP is to find $\hat\varphi\in\varPhi(x_0)$ such that 
	\begin{align*}
	v(\hat{\varphi})\ge v(\varphi),
	\end{align*} 
	for all $\varphi\in\varPhi(x_0)$. If this holds, we say that $\hat{\varphi}$ is an optimal plan. The optimization problem makes sense by  Assumption \ref{WellPosedms}(c).

	\begin{remark}
		For notational convenience, for every $t\in\nat$ and $\varphi\in\varPhi(x_0)$, we will write
		\begin{align}\label{notationalconvinience}
		g_t(x,\varphi_t):=g_t(x,\varphi_t(x))\hspace{0.3cm}\text{and}\hspace{0.3cm} f_t(x,\varphi_t):=f_t(x,\varphi_t(x)).
		\end{align}
	\end{remark}

	To proceed as in Section \ref{MPSec}, we need the following assumption, which is analogous to Assumption \ref{AMP}.

	\begin{assu}\label{AMPms}
		Let  $\hat \varphi=(\hat \varphi_0,\hat \varphi_1,\dots) \in \varPhi(x_0)$ be such that each $\hat\varphi_t$ is differentiable in the interior of $X_t$. For each  $\tau\in\nat$, define the sequence of functions $\rho^\tau_t: U_\tau(x_\tau^{\hat\varphi})\to\rzn$ as \begin{align*}
		&\rho^\tau_t(u)=\left[ \frac{\partial g_t}{\partial x}\left( x_t^{{\hat\varphi^\tau(u)}}, \hat \varphi_t\right)+\frac{\partial g_t}{\partial y}\left( x_t^{{\hat\varphi^\tau(u)}}, \hat \varphi_t\right)\frac{\partial \varphi_t}{\partial x}\left( x_t^{\hat{\varphi}^\tau(u)}\right)\right] \prod_{s=\tau+1}^{t-1}\left( \frac{\partial f_s}{\partial x}\left( x_s^{{\hat\varphi^\tau(u)}}, \hat \varphi_s\right) + \frac{\partial f_s}{\partial y}\left( x_s^{{\hat\varphi^\tau(u)}}, \hat\varphi_s\right)\frac{\partial \hat\varphi_s}{\partial x}\left( x_s^{\hat{\varphi}^\tau(u)}\right)\right) ,
		\end{align*}
		where $\hat\varphi^\tau(u)=(\hat \varphi_0,\dots,\hat \varphi_{\tau-1},\varphi_u,\hat \varphi_{\tau+1},\dots)$ and $\varphi_u(x)=u$ for all $x\in X_\tau$. 
		Given $\tau\in\nat$, we suppose that there exists an open neighborhood $\mathcal O_\tau\subset U_\tau(x_\tau^{\hat\varphi})$ of $\hat \varphi_\tau(x_\tau^
		{\hat\varphi})$ such that $\displaystyle\sum_{t=\tau+1}^\infty\rho^\tau_{t}$ converges uniformly on $\mathcal O_\tau$. 
		
	\end{assu}
	The following lemma contains the computation of the Gateaux differential of the performance index $v$. Its proof is analogous to the one of Lemma \ref{DV}, except that now we consider the vector space $\Lambda:=\left\lbrace \left\lbrace \varphi_t\right\rbrace \hspace{0.1cm}|\hspace{0.1cm}\varphi_t:X_t\to\rzm, t\in\nat\right\rbrace $.
	\begin{lem}\label{DVms}
		Let  $\hat \varphi=\left\lbrace \hat \varphi_t\right\rbrace \in \varPhi(x_0)$ such that  Assumption \ref{AMPms} holds. Let $y\in\rzm$ and $\tau\in\nat$. Then $\hat\varphi$ is an internal point in the direction $\varphi^{\tau,y}\in\Lambda$, where $\varphi^{\tau,y}$ is defined as
		\begin{align*}
		\varphi^{\tau,y}_t(x):=\left\{ \begin{array}{lcc}
		\displaystyle y & \text{if}\hspace{0.2cm}  t=\tau\hspace{0.2cm} \text{and} \hspace{0.2cm}x=x_\tau^{\hat\varphi} \\
		\\ 0 &\text{otherwise}. \\
		\end{array}
		\right.
		\end{align*}
		Moreover, the G\^ ateaux differential of $v$ at $\hat\varphi$ in the direction $\varphi^{\tau,y}$ exists and is given by 
		\begin{align*}
		\delta_v(\hat\varphi;\varphi^{\tau,y})=\frac{\partial g_\tau}{\partial y}(x_\tau^{\hat{\varphi}},\hat\varphi_\tau)y^*+ \lambda_{\tau+1}\frac{\partial f_\tau}{\partial y}(x_\tau^{\hat{\varphi}},\hat\varphi_\tau)y^*,
		\end{align*} 
		where $v$ is the function in (\ref{PIms}) and 
		\begin{align*}
		&\lambda_{\tau+1}:=\sum_{k=\tau+1}^\infty\left[ \frac{\partial g_k}{\partial x}\left( x_k^{\hat\varphi},\hat \varphi_k\right)+\frac{\partial g_k}{\partial y}\left( x_k^{\hat\varphi},\hat \varphi_k\right)\frac{\partial \hat\varphi_k}{\partial x}\left(x_k^{\hat\varphi}\right)\right]\prod_{s=\tau+1}^{k-1}\left(  \frac{\partial f_s}{\partial x}\left( x_s^{\hat\varphi},\hat \varphi_s\right)+\frac{\partial f_s}{\partial y}\left( x_s^{\hat\varphi},\hat \varphi_s\right)\frac{\partial \hat\varphi_s}{\partial x}\left(x_s^{\hat\varphi}\right)\right)
		\end{align*}
	\end{lem}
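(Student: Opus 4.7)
The plan is to mirror the proof of Lemma \ref{DV}, but with the crucial added bookkeeping that a perturbation of $\hat\varphi_\tau$ at the single state $x_\tau^{\hat\varphi}$ propagates forward through the dynamics \emph{and} through the state-dependence of the later policies $\hat\varphi_{\tau+1},\hat\varphi_{\tau+2},\dots$ At times $t<\tau$ and at states other than $x_\tau^{\hat\varphi}$, the perturbed strategy $\hat\varphi+\varepsilon\varphi^{\tau,y}$ coincides with $\hat\varphi$; at time $\tau$, state $x_\tau^{\hat\varphi}$, it takes the value $\hat\varphi_\tau(x_\tau^{\hat\varphi})+\varepsilon y$. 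Since $\hat\varphi_\tau(x_\tau^{\hat\varphi})\in\mathcal O_\tau\subset U_\tau(x_\tau^{\hat\varphi})$, there is $\varepsilon_\tau>0$ such that this perturbed value lies in $U_\tau(x_\tau^{\hat\varphi})$ for $|\varepsilon|<\varepsilon_\tau$, showing that $\hat\varphi$ is an internal point of $\varPhi(x_0)$ in the direction $\varphi^{\tau,y}$.

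Next I would set up the one-parameter family of state trajectories $h_t(\varepsilon):=x_t^{\hat\varphi+\varepsilon\varphi^{\tau,y}}$, noting that $h_t(\varepsilon)=x_t^{\hat\varphi}$ for $t\le\tau$. For $t=\tau+1$ we have $h_{\tau+1}(\varepsilon)=f_\tau(x_\tau^{\hat\varphi},\hat\varphi_\tau(x_\tau^{\hat\varphi})+\varepsilon y)$, so $h_{\tau+1}'(0)=\frac{\partial f_\tau}{\partial y}(x_\tau^{\hat\varphi},\hat\varphi_\tau)y^*$. For $t>\tau+1$, the recursion reads $h_{t+1}(\varepsilon)=f_t(h_t(\varepsilon),\hat\varphi_t(h_t(\varepsilon)))$, and the chain rule gives
\begin{align*}
h_{t+1}'(\varepsilon)=\Bigl[\tfrac{\partial f_t}{\partial x}(h_t(\varepsilon),\hat\varphi_t)+\tfrac{\partial f_t}{\partial y}(h_t(\varepsilon),\hat\varphi_t)\tfrac{\partial\hat\varphi_t}{\partial x}(h_t(\varepsilon))\Bigr]h_t'(\varepsilon).
\end{align*}
Evaluating at $\varepsilon=0$ and iterating yields, by induction,
\begin{align*}
h_t'(0)=\Biggl[\prod_{s=\tau+1}^{t-1}\Bigl(\tfrac{\partial f_s}{\partial x}(x_s^{\hat\varphi},\hat\varphi_s)+\tfrac{\partial f_s}{\partial y}(x_s^{\hat\varphi},\hat\varphi_s)\tfrac{\partial\hat\varphi_s}{\partial x}(x_s^{\hat\varphi})\Bigr)\Biggr]\tfrac{\partial f_\tau}{\partial y}(x_\tau^{\hat\varphi},\hat\varphi_\tau)y^*
\end{align*}
for every $t>\tau$. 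This is exactly the ``closed-loop'' version of part (a) in the proof of Lemma \ref{DV}.

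With $h_t'(0)$ in hand I would differentiate the truncated performance index $l_T(\varepsilon):=\sum_{t=0}^T g_t(h_t(\varepsilon),\hat\varphi_t(h_t(\varepsilon))+\varepsilon\varphi_t^{\tau,y}(h_t(\varepsilon)))$ term by term. The $t<\tau$ terms contribute $0$; the $t=\tau$ term contributes $\frac{\partial g_\tau}{\partial y}(x_\tau^{\hat\varphi},\hat\varphi_\tau)y^*$; and each $t>\tau$ term contributes, by the chain rule,
\begin{align*}
\Bigl[\tfrac{\partial g_t}{\partial x}(x_t^{\hat\varphi},\hat\varphi_t)+\tfrac{\partial g_t}{\partial y}(x_t^{\hat\varphi},\hat\varphi_t)\tfrac{\partial\hat\varphi_t}{\partial x}(x_t^{\hat\varphi})\Bigr]h_t'(0),
\end{align*}
which, after substituting the product formula for $h_t'(0)$, matches exactly the $t$-th summand in the defining series of $\lambda_{\tau+1}$ multiplied on the right by $\frac{\partial f_\tau}{\partial y}(x_\tau^{\hat\varphi},\hat\varphi_\tau)y^*$. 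Summing over $t$ from $\tau+1$ to $T$ and adding the $t=\tau$ contribution gives the formula of the statement with the infinite sum replaced by a finite one.

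The final and really the only delicate step is passing $T\to\infty$ inside the derivative, i.e.\ showing that $\delta_v(\hat\varphi;\varphi^{\tau,y})=\lim_T l_T'(0)$. This is where Assumption \ref{AMPms} enters: the uniform convergence on $\mathcal O_\tau$ of $\sum_{t=\tau+1}^{\infty}\rho_t^\tau$ translates, after contracting with the fixed factor $\frac{\partial f_\tau}{\partial y}(x_\tau^{\hat\varphi},\hat\varphi_\tau)y^*$, into uniform convergence of $l_T'(\varepsilon)$ on a neighborhood of $\varepsilon=0$, so that limit and differentiation commute (standard term-by-term differentiation for series of functions). The main obstacle is getting the Markov chain rule right so that the ``$\partial f/\partial x+\partial f/\partial y\,\partial\hat\varphi/\partial x$'' and analogous ``$\partial g/\partial x+\partial g/\partial y\,\partial\hat\varphi/\partial x$'' factors emerge in the exact form of $\lambda_{\tau+1}$ in the statement; once the bookkeeping is correct, the remainder of the argument is essentially identical to Lemma \ref{DV}.
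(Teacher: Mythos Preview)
Your proposal is correct and follows exactly the approach the paper intends: the paper does not write out a separate proof of Lemma \ref{DVms} but simply states that it is analogous to Lemma \ref{DV} with the vector space replaced by $\Lambda=\{\{\varphi_t\}\mid\varphi_t:X_t\to\rzm\}$. Your sketch supplies precisely the missing bookkeeping---the closed-loop chain rule producing the factors $\frac{\partial f_s}{\partial x}+\frac{\partial f_s}{\partial y}\frac{\partial\hat\varphi_s}{\partial x}$ and $\frac{\partial g_t}{\partial x}+\frac{\partial g_t}{\partial y}\frac{\partial\hat\varphi_t}{\partial x}$---and correctly identifies that the perturbed trajectory $h_t(\varepsilon)$ coincides with $x_t^{\hat\varphi^\tau(u)}$ for $u=\hat\varphi_\tau(x_\tau^{\hat\varphi})+\varepsilon y$, which is what makes Assumption \ref{AMPms} applicable to justify the limit--derivative interchange.
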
 
	Repeating the same arguments in Theorem \ref{MP} and with the aid of Lemma \ref{DVms}, we can prove the next theorem.
	\begin{thm}\label{MPms}
		Let $\hat \varphi=\left\lbrace\hat\varphi_t \right\rbrace \in\varPhi(x_0)$ be such that Assumption \ref{AMPms} holds. If $\hat\varphi$ is an optimal plan for the control model (\ref{dynamicms})-(\ref{tuplems}), then there exists  sequence $\left\lbrace \lambda_t\right\rbrace_{t=1}^\infty$ in $\rzn$ such that
		\begin{enumerate}
			\item[(a)] For all $t\in\natu$,
			\begin{align}\label{MPXms}
			\frac{\partial g_t}{\partial x}(x_t^{\hat\varphi},\hat\varphi_t)+\lambda_{t+1}\frac{\partial f_t}{\partial x}(x_t^{\hat\varphi},\hat\varphi_t)=\lambda_t,
			\end{align}
			\item[(b)] For all $t\in\nat$,
			\begin{align}\label{MPYms}
			\frac{\partial g_t}{\partial y}(x_t^{\hat\varphi},\hat\varphi_t)+\lambda_{t+1}\frac{\partial f_t}{\partial y}(x_t^{\hat\varphi},\hat\varphi_t)=0,
			\end{align}
			\item[(c)] For all $h\in\natu$,
			\begin{align}\label{TCms}
			\lim_{t\to\infty}\lambda_t \prod_{s=h}^{t-1}\left[ \frac{\partial f_s}{\partial x}\left( x_s^{\hat\varphi},\hat \varphi_s\right)+\frac{\partial f_s}{\partial y}\left( x_s^{\hat\varphi},\hat \varphi_s\right)\frac{\partial \hat\varphi_s}{\partial x}(x_s^{\hat\varphi})\right]=0.
			\end{align}  
		\end{enumerate}
		Moreover,

		\begin{equation} \label{mulmsthm}
		\begin{split}
		&\lambda_{t}:=\sum_{k=t}^\infty\left[ \frac{\partial g_k}{\partial x}\left( x_k^{\hat\varphi},\hat \varphi_k\right)\prod_{s=t}^{k-1}\left(  \frac{\partial f_s}{\partial x}\left( x_s^{\hat\varphi},\hat \varphi_s\right)+\frac{\partial f_s}{\partial y}\left( x_s^{\hat\varphi},\hat \varphi_s\right)\frac{\partial \hat\varphi_s}{\partial x}(x_s^{\hat\varphi})\right) +\right.\\
		&\hspace{1.7cm}\left.+\frac{\partial g_k}{\partial y}\left( x_k^{\hat\varphi},\hat \varphi_k\right)\frac{\partial \hat\varphi_k}{\partial x}(x_k^{\hat\varphi})\prod_{s=t}^{k-1}\left(  \frac{\partial f_s}{\partial x}\left( x_s^{\hat\varphi},\hat \varphi_s\right)+\frac{\partial f_s}{\partial y}\left( x_s^{\hat\varphi},\hat \varphi_s\right)\frac{\partial \hat\varphi_s}{\partial x}(x_s^{\hat\varphi})\right) \right].
		\end{split}
		\end{equation}

	\end{thm}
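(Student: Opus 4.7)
The plan is to mirror the three-step structure of the proof of Theorem \ref{MP}, substituting Lemma \ref{DVms} for Lemma \ref{DV} and accounting for the extra indirect-dependence terms that appear because Markov strategies depend on the state. I would define the candidate multiplier sequence $\left\lbrace \lambda_t\right\rbrace_{t=1}^\infty$ by formula (\ref{mulmsthm}); Assumption \ref{AMPms} (applied with $u=\hat\varphi_\tau(x_\tau^{\hat\varphi})$) guarantees that the defining series converges for each $t$, so each $\lambda_t$ is well-defined in $\rzn$. For brevity in what follows, abbreviate
\begin{align*}
G_k&:=\frac{\partial g_k}{\partial x}(x_k^{\hat\varphi},\hat\varphi_k)+\frac{\partial g_k}{\partial y}(x_k^{\hat\varphi},\hat\varphi_k)\frac{\partial\hat\varphi_k}{\partial x}(x_k^{\hat\varphi}),\\
D_s&:=\frac{\partial f_s}{\partial x}(x_s^{\hat\varphi},\hat\varphi_s)+\frac{\partial f_s}{\partial y}(x_s^{\hat\varphi},\hat\varphi_s)\frac{\partial\hat\varphi_s}{\partial x}(x_s^{\hat\varphi}),
\end{align*}
so that $\lambda_t=\sum_{k=t}^\infty G_k\prod_{s=t}^{k-1}D_s$.

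I would first establish (b). Fix $\tau\in\nat$ and $y\in\rzm$. Lemma \ref{DVms} shows that $\hat\varphi$ is an internal point of $\varPhi(x_0)$ in the direction $\varphi^{\tau,y}$ and gives an explicit expression for $\delta_v(\hat\varphi;\varphi^{\tau,y})$ in which the coefficient $\lambda_{\tau+1}$ is exactly the one in (\ref{mulmsthm}). Since $\hat\varphi$ maximizes $v$, Proposition \ref{GD} gives $\delta_v(\hat\varphi;\varphi^{\tau,y})=0$, that is,
\begin{align*}
\left( \frac{\partial g_\tau}{\partial y}(x_\tau^{\hat\varphi},\hat\varphi_\tau)+\lambda_{\tau+1}\frac{\partial f_\tau}{\partial y}(x_\tau^{\hat\varphi},\hat\varphi_\tau)\right) y^*=0,
\end{align*}
and since $y\in\rzm$ is arbitrary, (\ref{MPYms}) follows.

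For (a), I would split off the $k=t$ term of the series defining $\lambda_t$ and factor the $s=t$ matrix out of the remaining product, using $\prod_{s=t}^{k-1}D_s=\bigl(\prod_{s=t+1}^{k-1}D_s\bigr)D_t$ (the matrix convention stated before Lemma \ref{DV}). The residual sum is $\lambda_{t+1}$, so the rearrangement gives $\lambda_t=G_t+\lambda_{t+1}D_t$, which expanded reads
\begin{align*}
\lambda_t=\frac{\partial g_t}{\partial x}+\lambda_{t+1}\frac{\partial f_t}{\partial x}+\left(\frac{\partial g_t}{\partial y}+\lambda_{t+1}\frac{\partial f_t}{\partial y}\right)\frac{\partial\hat\varphi_t}{\partial x},
\end{align*}
with all derivatives evaluated along the optimal trajectory. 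The parenthetical factor vanishes by (\ref{MPYms}) (part (b), already proved), and (\ref{MPXms}) follows. For (c), I would repeat the argument of Theorem \ref{MP}(c): multiplying $\lambda_t$ by $\prod_{s=h}^{t-1}D_s$ pushes the lower index of the inner product in every summand from $s=t$ down to $s=h$, so that $\lambda_t\prod_{s=h}^{t-1}D_s$ becomes the tail (starting at $k=t$) of the series $\sum_{k=h}^\infty G_k\prod_{s=h}^{k-1}D_s$; by Assumption \ref{AMPms} this series converges, so its tail tends to $0$ as $t\to\infty$.

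The main obstacle is the intertwining of (a) and (b): unlike in the open-loop proof, where the recursion pops out of the defining series without any side condition, here the algebraic manipulation produces spurious $\frac{\partial\hat\varphi_t}{\partial x}$ terms that must be killed using (b), so one must prove (b) before (a). Beyond that bookkeeping point, the uniform convergence granted by Assumption \ref{AMPms} is what underwrites both Lemma \ref{DVms} and the tail bound in (c); no new technical ideas are needed.
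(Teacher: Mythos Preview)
Your proposal is correct and follows exactly the route the paper indicates: it says only that Theorem \ref{MPms} is obtained by ``repeating the same arguments in Theorem \ref{MP} and with the aid of Lemma \ref{DVms},'' without spelling out the details. Your observation that in the Markov setting part (b) must be established \emph{before} part (a)---because the recursion $\lambda_t=G_t+\lambda_{t+1}D_t$ carries the spurious $\frac{\partial\hat\varphi_t}{\partial x}$ terms that only (\ref{MPYms}) can eliminate---is precisely the bookkeeping adjustment needed to make the open-loop argument go through, and the paper leaves this implicit.
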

	As in proposition \ref{mul} and Corollary \ref{mulunique}, we now obtain the following.
	\begin{prop}\label{mulms}
		Suppose that a plan $\hat\psi\in \Psi$ satisfies the MP (\ref{MPXms})-(\ref{MPYms}) and the TC (\ref{TCms}). Then $\left\lbrace \lambda_t\right\rbrace $ is given by (\ref{mulmsthm}). 
	\end{prop}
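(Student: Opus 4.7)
The strategy is to mirror the proof of Proposition \ref{mul}, with one extra algebraic step that brings the adjoint recursion into a form whose iteration matches (\ref{mulmsthm}). Let $\{\lambda'_t\}$ denote an arbitrary sequence satisfying (\ref{MPXms})--(\ref{MPYms}) and the TC (\ref{TCms}).

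First, I would use (\ref{MPYms}) to upgrade the recursion (\ref{MPXms}), which is stated in terms of the \emph{partial} derivative $\partial f_t/\partial x$, into a recursion in terms of the \emph{total} derivative along $\hat\varphi$. Multiplying the identity $\partial g_t/\partial y + \lambda'_{t+1}\partial f_t/\partial y = 0$ on the right by $\partial\hat\varphi_t/\partial x(x_t^{\hat\varphi})$ produces zero, and adding this null term to (\ref{MPXms}) yields
\begin{align*}
\lambda'_t = A_t + \lambda'_{t+1} B_t,
\end{align*}
where $A_t$ is the bracketed factor appearing inside the sum of (\ref{mulmsthm}) and $B_t$ is the combined Jacobian $\partial f_t/\partial x + (\partial f_t/\partial y)(\partial\hat\varphi_t/\partial x)$ appearing inside the product, with all derivatives evaluated at $(x_t^{\hat\varphi}, \hat\varphi_t)$ (respectively at $x_t^{\hat\varphi}$).

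Next, I would iterate this modified recursion by induction on $h \ge t$, exactly as in Proposition \ref{mul}, to obtain
\begin{align*}
\lambda'_t = \sum_{s=t}^{h} A_s \prod_{i=t}^{s-1} B_i + \lambda'_{h+1} \prod_{i=t}^{h} B_i.
\end{align*}
Letting $h \to \infty$, the TC (\ref{TCms}), instantiated with its free index equal to our $t$, makes the residual term $\lambda'_{h+1} \prod_{i=t}^{h} B_i$ vanish. Convergence of the remaining series is granted by Assumption \ref{AMPms} applied to $\hat\varphi$ (the $\partial g_s/\partial y \cdot \partial\hat\varphi_s/\partial x$ contribution inside $A_s$ is controlled since, by (\ref{MPYms}), it equals $-\lambda'_{s+1}(\partial f_s/\partial y)(\partial\hat\varphi_s/\partial x)$, which is already packaged into $\rho_s^\tau$). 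Splitting $A_s$ into its two summands produces exactly the right-hand side of (\ref{mulmsthm}).

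The only real obstacle lies in the first step: the recursion (\ref{MPXms}) alone cannot be iterated into (\ref{mulmsthm}), since its right-hand side contains only the partial $\partial f_t/\partial x$, whereas (\ref{mulmsthm}) and the TC (\ref{TCms}) both involve the total Jacobian $B_t$. The key observation is that (\ref{MPYms}) supplies precisely the null quantity that converts the partial recursion into the desired total-derivative one; once this is noted, induction and the limit passage are routine analogs of the open-loop case.
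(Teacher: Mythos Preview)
Your proof is correct and is exactly the argument the paper has in mind when it writes ``As in Proposition~\ref{mul}''; in fact you have supplied the one nontrivial detail the paper suppresses, namely that (\ref{MPYms}) multiplied by $\partial\hat\varphi_t/\partial x$ is the null term needed to convert the partial-derivative recursion (\ref{MPXms}) into the total-derivative recursion $\lambda'_t=A_t+\lambda'_{t+1}B_t$ that matches both (\ref{mulmsthm}) and the TC (\ref{TCms}). One small simplification: the appeal to Assumption~\ref{AMPms} for convergence of the series is unnecessary, since once the residual $\lambda'_{h+1}\prod_{i=t}^{h}B_i$ vanishes by (\ref{TCms}), the partial sums automatically converge to the fixed quantity $\lambda'_t$.
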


	\begin{cor}\label{muluniquems}
		The sequence $\left\lbrace \lambda_t\right\rbrace $ given in Theorem \ref{MPms} is unique.
	\end{cor}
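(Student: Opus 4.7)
The plan is to derive Corollary \ref{muluniquems} directly from Proposition \ref{mulms}, mirroring the way Corollary \ref{mulunique} was obtained from Proposition \ref{mul} in the open-loop setting. Suppose $\{\lambda_t\}_{t=1}^\infty$ and $\{\lambda'_t\}_{t=1}^\infty$ are two sequences in $\rzn$ that both realize the conclusion of Theorem \ref{MPms} for the same optimal Markov strategy $\hat\varphi$. By definition, each of them satisfies the MP relations (\ref{MPXms})-(\ref{MPYms}) and the transversality condition (\ref{TCms}) along the trajectory $\{x_t^{\hat\varphi}\}$. Applying Proposition \ref{mulms} to each sequence forces both $\lambda_t$ and $\lambda'_t$ to coincide with the explicit series on the right-hand side of (\ref{mulmsthm}); hence $\lambda_t=\lambda'_t$ for every $t\in\natu$, which is exactly the uniqueness claim.

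All the substantive work sits in Proposition \ref{mulms}, the Markovian analogue of Proposition \ref{mul}. I expect its proof to follow the same induction as in the open-loop case: one iterates the recurrence (\ref{MPXms}) forward in $t$, obtaining a finite representation of $\lambda_t$ as a partial sum plus a boundary term of the form $\lambda_{h+1}\prod_{i=t}^{h}[\cdots]$. The key new ingredient is that, because $\hat\varphi_s$ is state-dependent, differentiating $f_s$ along the closed-loop dynamics (\ref{dynamicms})-(\ref{dynamicms2}) via the chain rule produces the total-derivative factor $\frac{\partial f_s}{\partial x}+\frac{\partial f_s}{\partial y}\frac{\partial\hat\varphi_s}{\partial x}$. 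This is precisely the operator appearing both in the transversality hypothesis (\ref{TCms}) and in the target formula (\ref{mulmsthm}), so letting $h\to\infty$ annihilates the boundary term and the partial sums converge to (\ref{mulmsthm}).

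The main obstacle is purely notational: one has to match the bookkeeping of the total-derivative products generated by the iterated recurrence against the ones appearing in (\ref{TCms}) and (\ref{mulmsthm}), including the extra $\frac{\partial g_k}{\partial y}\frac{\partial\hat\varphi_k}{\partial x}$ term that must be absorbed via (\ref{MPYms}). Once these products are lined up, uniqueness of $\{\lambda_t\}$ follows at once from the representation formula, and Corollary \ref{muluniquems} is a one-line consequence.
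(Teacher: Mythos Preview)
Your proposal is correct and matches the paper's approach: the paper states Corollary~\ref{muluniquems} immediately after Proposition~\ref{mulms} without a separate proof, exactly as Corollary~\ref{mulunique} was deduced from Proposition~\ref{mul}. Your sketch of the underlying argument for Proposition~\ref{mulms}---combining (\ref{MPXms}) with (\ref{MPYms}) multiplied by $\frac{\partial\hat\varphi_t}{\partial x}$ to obtain a recurrence in the total-derivative factors, iterating, and killing the boundary term via (\ref{TCms})---is precisely what the paper's ``as in Proposition~\ref{mul}'' is pointing to, and you correctly flag the one genuinely new step (using (\ref{MPYms}) to generate the $\frac{\partial g_k}{\partial y}\frac{\partial\hat\varphi_k}{\partial x}$ summand).
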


	We can proceed as in Section \ref{SUFCON} to obtain sufficient conditions.
	\begin{assu}
		The control model (\ref{dynamicms})-(\ref{tuplems}) satisfies the following:
		\begin{enumerate}\label{SuffConfhms}
			\item[(a)]  the set of plans $\varPhi(x_0)$ is convex;	\item[(b)]	the performance index $v$ in  (\ref{PIms}) is concave.
			\item[(c)] there exists a sequence of real  numbers $\left\lbrace m_t\right\rbrace $ such that $\sum_{t=0}^\infty m_t$ converges and 
			$
			g_t(x_t^\varphi,\varphi_t)\ge m_t
			$ for all $\varphi=(\varphi_0,\varphi_1,\dots)\in\varPhi(x_0)$.
		\end{enumerate}
	\end{assu}

	\begin{thm}\label{SufThmfhms}
		Let $\hat{\varphi}\in\varPhi(x_0)$ such that Assumption \ref{AMPms} holds. Suppose that $\hat\varphi$ satisfies (\ref{MPXms})-(\ref{TCms}). If Assumption \ref{SuffConfhms} holds, then $\hat\varphi$ is an optimal plan for the control model (\ref{dynamicms})-(\ref{tuplems}).
	\end{thm}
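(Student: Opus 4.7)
The plan is to follow the pattern of the proof of Theorem \ref{SufThm}, with suitable modifications to handle the Markov setting. For each $k \in \nat$, I would define the truncated performance index $v_{\hat\varphi}^k : \varPhi(x_0) \to \rz$ by
\begin{align*}
v_{\hat\varphi}^k(\varphi_0, \varphi_1, \dots) := v(\varphi_0, \dots, \varphi_k, \hat\varphi_{k+1}, \hat\varphi_{k+2}, \dots),
\end{align*}
which replaces the tail of any strategy after time $k$ by $\hat\varphi$. By Assumption \ref{SuffConfhms}(a)-(b), each $v_{\hat\varphi}^k$ is a concave function on the convex set $\varPhi(x_0)$.

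The technical heart of the argument is to compute $\delta_{v_{\hat\varphi}^k}(\hat\varphi; \varphi - \hat\varphi)$ for an arbitrary $\varphi \in \varPhi(x_0)$. Adapting the proof of Lemma \ref{DVms}, I would introduce the tangent sequence $\eta_t := \frac{d}{d\varepsilon} x_t^{\hat\varphi + \varepsilon(\varphi - \hat\varphi)}\big|_{\varepsilon=0}$, which by the chain rule satisfies $\eta_0 = 0$ and, for $t \ge 0$,
\begin{align*}
\eta_{t+1} = \left[\frac{\partial f_t}{\partial x}(x_t^{\hat\varphi}, \hat\varphi_t) + \frac{\partial f_t}{\partial y}(x_t^{\hat\varphi}, \hat\varphi_t)\frac{\partial \hat\varphi_t}{\partial x}(x_t^{\hat\varphi})\right]\eta_t + \frac{\partial f_t}{\partial y}(x_t^{\hat\varphi}, \hat\varphi_t)\, \Delta_t^*,
\end{align*}
where $\Delta_t := (\varphi_t - \hat\varphi_t)(x_t^{\hat\varphi})$ for $t \le k$ and $\Delta_t = 0$ for $t > k$. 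Differentiating the series for $v_{\hat\varphi}^k$ term by term (justified by the uniform convergence in Assumption \ref{AMPms}), substituting the explicit solution of this linear recursion, and swapping the order of summation, I would obtain
\begin{align*}
\delta_{v_{\hat\varphi}^k}(\hat\varphi; \varphi - \hat\varphi) = \sum_{\tau=0}^{k}\left[\frac{\partial g_\tau}{\partial y}(x_\tau^{\hat\varphi}, \hat\varphi_\tau) + \lambda_{\tau+1}\frac{\partial f_\tau}{\partial y}(x_\tau^{\hat\varphi}, \hat\varphi_\tau)\right]\Delta_\tau^*,
\end{align*}
with $\lambda_{\tau+1}$ given by (\ref{mulmsthm}).

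Since $\hat\varphi$ satisfies (\ref{MPYms}), each bracket vanishes, so $\delta_{v_{\hat\varphi}^k}(\hat\varphi; \varphi - \hat\varphi) = 0$ for every $\varphi \in \varPhi(x_0)$. Proposition \ref{SGD}, applied to the concave function $v_{\hat\varphi}^k$, then yields $v_{\hat\varphi}^k(\hat\varphi) \ge v_{\hat\varphi}^k(\varphi)$. Using Assumption \ref{SuffConfhms}(c) to bound the tail below,
\begin{align*}
v(\hat\varphi) = v_{\hat\varphi}^k(\hat\varphi) \ge v_{\hat\varphi}^k(\varphi) \ge \sum_{t=0}^k g_t(x_t^\varphi, \varphi_t) + \sum_{t=k+1}^\infty m_t,
\end{align*}
and letting $k \to \infty$ yields $v(\hat\varphi) \ge v(\varphi)$.

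I expect the main obstacle to be the Gateaux-derivative computation in the second step, which is noticeably more delicate than in the open-loop case: a perturbation of $\varphi_\tau$ influences each future state $x_t^{\hat\varphi}$ both directly through the $y$-slot of $f_\tau$ and indirectly through the $x$-slot of $\hat\varphi_s$ for $\tau < s < t$. The explicit form (\ref{mulmsthm}) of $\lambda_t$, which aggregates both the $\frac{\partial f}{\partial x} + \frac{\partial f}{\partial y}\frac{\partial \hat\varphi}{\partial x}$ transitions and the analogous correction to the $g$-gradient, is engineered precisely so that after the change in summation order the propagated first-order effects collapse into the single bracket $\frac{\partial g_\tau}{\partial y} + \lambda_{\tau+1}\frac{\partial f_\tau}{\partial y}$ that is killed by (\ref{MPYms}).
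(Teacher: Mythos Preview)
Your proposal is correct and follows exactly the route the paper intends: the paper states Theorem~\ref{SufThmfhms} without proof, referring the reader back to the argument of Theorem~\ref{SufThm}, and your outline is precisely that argument transported to the Markov setting (truncated functionals $v_{\hat\varphi}^k$, Gateaux differential via the analogue of Lemma~\ref{DVms}, vanishing by \eqref{MPYms} together with Proposition~\ref{mulms}, then Proposition~\ref{SGD} and the tail bound from Assumption~\ref{SuffConfhms}(c)). The extra bookkeeping you flag---that perturbing $\varphi_\tau$ feeds into later states both through $\partial f/\partial y$ and through $\partial\hat\varphi_s/\partial x$---is exactly what the modified multipliers \eqref{mulmsthm} absorb, so there is no gap.
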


	\begin{examp}[Optimal economic growth \cite{EconomicGrowth,Ljungq}]
		One of the most studied models in economic growth is the Brock and Mirman model. Capital is represented by $x_t$, and $u_t$ denotes the consumption. The system's dynamics is given by
		\begin{align*}
		x_{t+1}=A_tx_t^\alpha-u_t,
		\end{align*}
		where $\alpha\in(0,1)$. The performance index to be maximized is 
		\begin{align*}
		\sum_{t=0}^\infty\beta^t\log u_t.
		\end{align*}
		
		In the present context, our control model (\ref{dynamicms})-(\ref{Performanceindexms}) has the following components:
		
		\begin{itemize}
			\item state space $X_t\equiv X:=(0,\infty)$;
			\item control space $U:=(0,\infty)$ and control constraint sets $U_t(x):=(0,A_tx^\alpha)$ for all $x\in X$;
			\item system functions $f_t:\mathbb K_t\to X$ with $f_t(x,u):=A_tx^\alpha-u$;
			\item cost functions $g_t:\mathbb K_t\to \rz$ with $g_t(x,u)=\beta^t\log u $.
		\end{itemize}
		
		 For an optimal Markov strategy, Theorem \ref{MPms} yields

		\begin{align}\label{abmm}
		0=\beta^t \frac{1}{\hat\varphi_t(x_t^{\hat\varphi})}-\lambda_{t+1}\hspace{0.3cm}\forall_{t\in\nat},
		\end{align} 
		\begin{align}\label{bbmm}
		\lambda_t=\alpha A_t[x_t^{\hat\varphi}]^{\alpha-1}\lambda_{t+1}\hspace{0.3cm}\forall_{t\in\nat}.
		\end{align}

		In \cite[pp.33]{LagrangeMethod}, Chow solved these  equations using the ``guess and verify method"; he proposes a solution of the form ${\hat\varphi}_t(x)=dA_tx^\alpha$. Using this conjecture for $\hat\varphi_t(x_t^{\hat\varphi})$ and combining $(\ref{abmm})$ and $(\ref{bbmm})$, one obtains $\displaystyle\lambda_t=\frac{\alpha\beta^t}{dx_t^{\hat\varphi}}$. We can use this to evaluate $$\displaystyle\lambda_{t+1}=\frac{\alpha\beta^t}{d([A_tx_t^{\varphi}]^\alpha-dA_t[x_t^{\varphi}]^\alpha)},$$ on the right hand side of $(\ref{bbmm})$. Equating coefficients on both sides of $(\ref{bbmm})$, one obtains $d=1-\alpha\beta$. 
		
		We next verify Assumption \ref{AMPms}. Let $\tau\in\nat$ and consider $\rho_t^\tau:\to\rz$ as in the assumption. Observe that 
		\begin{align*}
		\alpha\frac{x_{s+1}^{\hat\varphi^\tau(u)}}{x_s^{\hat\varphi^\tau(u)}}=\frac{\partial f_s}{\partial x}\left( x_s^{{\hat\varphi^\tau(u)}}, \hat \varphi_s\right) + \frac{\partial f_s}{\partial y}\left( x_s^{{\hat\varphi^\tau(u)}}, \hat \varphi_s\right)\frac{\partial \varphi_s}{\partial x}\left( x_s^{\hat{\varphi}^\tau(u)}\right).
		\end{align*}
		Now, take $\mathcal O_\tau=(\eta',\eta)$ as a small neighborhood of $(1-\alpha\beta)A_t x_\tau^{\hat\varphi}$ properly contained in $[0,A_t[x_\tau^{\hat\varphi}]^\alpha]$. Then we have
		\begin{align*}
		\left|\rho_{\tau}(u)\right|&=\left|\frac{\beta^t}{x_t^{\hat\varphi^\tau(u)}}\prod_{s=\tau+1}^{t-1}\alpha\frac{x_{s+1}^{\hat\varphi^\tau(u)}}{x_s^{\hat\varphi^\tau(u)}}\right|\\
		&=\left| \frac{\beta^t \alpha^{t-\tau+1}}{A_\tau[x_\tau^{\hat\varphi}]^\alpha-u}\right| \\
		&<\left|\frac{\beta^t}{A_\tau [x_\tau^{\hat\varphi}]^\alpha-\eta}\right|.
		\end{align*}
		Thus, by the Weierstrass M-test, $\displaystyle\sum_{t=\tau+1}^{\infty}\rho_t^\tau$ converges uniformly on $\mathcal O_\tau$.
	\end{examp}

	\subsection{The Euler Equation}
	
	Let us now go back to the optimal control model (\ref{dynamicms})-(\ref{tuplems}) in Section \ref{Markovmodel}. As usual in the Euler equation approach, we will consider the particular case in which the functions $f_t$ in (\ref{dynamicms})-(\ref{dynamicms2}) satisfy, for each $t\in\nat$,  $f_t(x,u)=u$ for  all $(x,u)\in\mathbb K_t$. We assume this and the well-posedness Assumption \ref{WellPosedms} to hold during this subsection.
	
	The particular form of the dynamic functions means that at time $t$, we are directly determining the following state of the system, since, a plan $\varphi\in\varPhi(x_0)$ will determine $x_{t+1}^\varphi$ identically  as a function $\varphi_t$ of $x_t^\varphi$. Thus, we want to maximize the performance index
	\begin{align}\label{eulerpi}
	v(\varphi)=\sum_{t=0}^\infty g_t(x_t^\varphi,x_{t+1}^{\varphi}),
	\end{align}
	where
	
	\begin{align}\label{eulerdyn}
	&x_0^\varphi\hspace{0.2cm}=x_0\\
	&x_{t+1}^\varphi=\varphi_t(x_t^\varphi)\hspace{0.3cm}\forall_{t\in\nat}\label{eulerdyn2},
	\end{align}
	and for each $t\in\nat$, $\varphi_t(x)\in U_t(x)$ for all $x\in X_t$.

	For this problem, Assumption \ref{AMPms} reduces to the following one.
	\begin{assu}\label{AEE}
		Let  $\hat \varphi=(\hat \varphi_0,\hat \varphi_1,\dots) \in \varPhi(x_0)$ be such that each $\hat\varphi_t$ is differentiable in the interior of $X_t$. For each  $\tau\in\nat$, define the sequence of functions $\rho^\tau_t: U_\tau(x_\tau^{\hat\varphi})\to\rzn$ as \begin{align*}
		\rho^\tau_t(u)=&\left[  \frac{\partial g_t}{\partial x}\left( x_t^{{\hat\varphi^\tau(u)}}, x_{t+1}^{\hat{\varphi}^\tau(u)}\right) +\frac{\partial g_t}{\partial y}\left( x_t^{{\hat\varphi^\tau(u)}}, x_{t+1}^{\hat{\varphi}^\tau(u)}\right)\frac{\partial \varphi_t}{\partial x}\left( x_t^{\hat{\varphi}^\tau(u)}\right)\right]\prod_{s=\tau+1}^{t-1}\frac{\partial \varphi_s}{\partial x}\left( x_s^{\hat{\varphi}^\tau(u)}\right) ,
		\end{align*}
		where $\hat\varphi^\tau(u)=(\hat \varphi_0,\dots,\hat \varphi_{\tau-1},\varphi_u,\hat \varphi_{\tau+1},\dots)$ and $\varphi_u(x)=u$ for all $x\in X_\tau$. 
		Given $\tau\in\nat$, we suppose that there exists an open neighborhood $\mathcal O_\tau\subset U_\tau(x_\tau^{\hat\varphi})$ of $x_{\tau+1}^
		{\hat\varphi}$ such that $\displaystyle\sum_{t=\tau+1}^\infty\rho^\tau_{t}$ converges uniformly on $\mathcal O_\tau$. 
		
	\end{assu}

	The following theorem is a consequence of Theorem \ref{MPms}. Equation (\ref{MPEE}) is the so-called Euler Equation (EE).

	\begin{thm} Let $\hat\varphi$ be an optimal plan for the control model (\ref{eulerpi})-(\ref{eulerdyn2}). Suppose that $\hat\varphi$ satisfies Assumption \ref{AEE}. Then

		\begin{enumerate}
			\item[(a)] For each $t\in\natu$,

			\begin{align}\label{MPEE}
			\displaystyle\frac{\partial g_{t-1}}{\partial y}\left( x_{t-1}^{\hat{\varphi}},x_{t}^{\hat\varphi}\right) +\displaystyle\frac{\partial g_{t}}{\partial x}\left( x_t^{\hat{\varphi}},x_{t+1}^{\hat\varphi}\right)=0;
			\end{align}
			
			\item[(b)]for each $h\in\natu$,
			\begin{align}\label{TCEE}
			\lim_{t\to\infty}\frac{\partial g_{t-1}}{\partial y}\left( x_{t-1}^{\hat{\varphi}},x_{t}^{\hat\varphi}\right)\prod_{s=h}^{t-1}\frac{\partial \hat\varphi_s}{\partial x}(x_s^{\hat\varphi})=0.
			\end{align}
		\end{enumerate}
		
	\end{thm}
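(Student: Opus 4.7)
The plan is to recognize this theorem as a direct specialization of Theorem \ref{MPms} to the Euler-equation framework, in which $f_t(x,u)=u$ for all $(x,u)\in\mathbb K_t$. The key simplifying identities are
\begin{align*}
\frac{\partial f_t}{\partial x}(x,u)=0\qquad\text{and}\qquad\frac{\partial f_t}{\partial y}(x,u)=\mathbb I,
\end{align*}
together with $x_{t+1}^{\hat\varphi}=\hat\varphi_t(x_t^{\hat\varphi})$ from (\ref{eulerdyn2}).

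First, I would verify the bookkeeping that Assumption \ref{AMPms} reduces to Assumption \ref{AEE} under this specialization: the factor $\tfrac{\partial f_s}{\partial x}+\tfrac{\partial f_s}{\partial y}\tfrac{\partial \hat\varphi_s}{\partial x}$ collapses to $\tfrac{\partial \hat\varphi_s}{\partial x}$, and the function $\rho^\tau_t$ in Assumption \ref{AMPms} becomes exactly the one in Assumption \ref{AEE}. This justifies invoking Theorem \ref{MPms} for the optimal plan $\hat\varphi$.

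Next, I would read off the simplified forms of (\ref{MPXms}) and (\ref{MPYms}). The former collapses to $\lambda_t=\tfrac{\partial g_t}{\partial x}(x_t^{\hat\varphi},x_{t+1}^{\hat\varphi})$, valid for $t\in\natu$, while the latter gives $\lambda_{t+1}=-\tfrac{\partial g_t}{\partial y}(x_t^{\hat\varphi},x_{t+1}^{\hat\varphi})$ for $t\in\nat$; shifting the index then yields $\lambda_t=-\tfrac{\partial g_{t-1}}{\partial y}(x_{t-1}^{\hat\varphi},x_t^{\hat\varphi})$ for $t\in\natu$. Equating the two expressions for $\lambda_t$ produces the Euler equation (\ref{MPEE}) for every $t\in\natu$. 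For part (b), I substitute this shifted expression for $\lambda_t$ into the transversality condition (\ref{TCms}); the bracketed product collapses to $\prod_{s=h}^{t-1}\tfrac{\partial \hat\varphi_s}{\partial x}(x_s^{\hat\varphi})$, and the overall sign can be absorbed into the zero limit, yielding (\ref{TCEE}).

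There is no real obstacle beyond careful index management: the only subtle point is the shift that expresses $\lambda_t$ in two distinct ways, once through $\tfrac{\partial g_t}{\partial x}$ coming from (\ref{MPXms}) and once through $\tfrac{\partial g_{t-1}}{\partial y}$ coming from (\ref{MPYms}) at the previous step. As soon as these two expressions are equated, the Euler relation drops out, and the transversality statement follows by direct substitution.
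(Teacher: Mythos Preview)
Your proposal is correct and follows essentially the same approach as the paper: both specialize Theorem \ref{MPms} using $\tfrac{\partial f_t}{\partial x}=0$ and $\tfrac{\partial f_t}{\partial y}=\mathbb I$, deduce $\lambda_t=\tfrac{\partial g_t}{\partial x}(x_t^{\hat\varphi},x_{t+1}^{\hat\varphi})$ from (\ref{MPXms}) and $\lambda_{t+1}=-\tfrac{\partial g_t}{\partial y}(x_t^{\hat\varphi},x_{t+1}^{\hat\varphi})$ from (\ref{MPYms}), and combine these to obtain (\ref{MPEE}) and (\ref{TCEE}). Your additional remark that Assumption \ref{AMPms} reduces to Assumption \ref{AEE} is a helpful bookkeeping check that the paper leaves implicit.
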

	
	\begin{proof}
		
		From Theorem \ref{MPms} (a)-(b), there exists a sequence $\left\lbrace\lambda \right\rbrace_{t=1}^\infty $ such that, for each $t\in\natu$,
		\begin{align*}
		\lambda_{t}=\frac{\partial g_t}{\partial x}(x_t^{\hat\varphi},x_{t+1}^{\hat\varphi}),
		\end{align*}
		and, for each $t\in\nat$
		\begin{align*}
		0=\frac{\partial g_t}{\partial y}(x_t^{\hat\varphi},x_{t+1}^{\hat\varphi})+\lambda_{t+1}.
		\end{align*}
		These facts yield $(a)$. 
		Part $(b)$, follows from part $(c)$ of Theorem \ref{MPms} and the fact that $\displaystyle\lambda_t=-\frac{\partial g_{t-1}}{\partial y}( x_{t-1}^{\hat{\varphi}},x_{t}^{\hat\varphi})$.
	\end{proof}
	
	To establish sufficient conditions we use the next assumption, which is essentially Assumption \ref{SuffConfhms}.
	
	\begin{assu}
		The control model (\ref{eulerpi})-(\ref{eulerdyn2}) satisfies the following:
		\begin{enumerate}\label{SuffConEE}
			\item[(a)]  the set of plans $\varPhi(x_0)$ is convex;	\item[(b)] the performance index is concave;
			\item[(c)]	there exists a sequence of real  numbers $\left\lbrace m_t\right\rbrace $ such that $\sum_{t=0}^\infty m_t$ converges and 
			$
			g_t(x_t^\varphi,x_{t+1}^\varphi)\ge m_t
			$ 
			for all  $\varphi=(\varphi_0,\varphi_1,\dots)\in\varPhi(x_0)$ .
		\end{enumerate}
	\end{assu}

	Sufficient conditions follow from Theorem \ref{SufThmfhms}.
	
	\begin{thm}\label{SufThmfhEE}
		Let $\hat{\varphi}\in\varPhi(x_0)$ be such that Assumption \ref{AEE} holds. Suppose that $\hat\varphi$ satisfies (\ref{MPEE})-(\ref{TCEE}). If Assumption \ref{SuffConEE} holds, then $\hat\varphi$ is an optimal plan for the control model (\ref{eulerpi})-(\ref{eulerdyn2}).
	\end{thm}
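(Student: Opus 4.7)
The plan is to reduce Theorem \ref{SufThmfhEE} to the already-established Markov-strategies sufficient-conditions result, Theorem \ref{SufThmfhms}, by exploiting the fact that the Euler-equation setting is simply the special case $f_t(x,u)=u$ of the Markov model. In this case $\partial f_t/\partial x \equiv 0$ and $\partial f_t/\partial y \equiv \mathbb{I}$, so the adjoint recursion and transversality condition collapse into one-step objects expressible purely in terms of $\partial g_t/\partial x$ and $\partial g_t/\partial y$.

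First I would define the candidate adjoint sequence $\lambda_t := -\dfrac{\partial g_{t-1}}{\partial y}\bigl(x_{t-1}^{\hat\varphi}, x_t^{\hat\varphi}\bigr)$ for each $t\in\natu$. Substituting $\partial f_t/\partial x = 0$ and $\partial f_t/\partial y = \mathbb{I}$ into (\ref{MPXms}) gives $\lambda_t = \partial g_t/\partial x(x_t^{\hat\varphi},x_{t+1}^{\hat\varphi})$, which combined with the definition of $\lambda_t$ is exactly the Euler equation (\ref{MPEE}); similarly (\ref{MPYms}) reduces to $\partial g_t/\partial y + \lambda_{t+1} = 0$, which is simply the definition of $\lambda_{t+1}$. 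Thus the hypothesis that $\hat\varphi$ satisfies (\ref{MPEE}) yields both (\ref{MPXms}) and (\ref{MPYms}) for this particular $\{\lambda_t\}$.

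Next I would check the transversality condition. Under $f_t(x,u)=u$, the bracket in (\ref{TCms}) becomes $\partial f_s/\partial x + (\partial f_s/\partial y)(\partial \hat\varphi_s/\partial x) = \partial \hat\varphi_s/\partial x$, so (\ref{TCms}) reads $\lim_{t\to\infty} \lambda_t \prod_{s=h}^{t-1} \partial \hat\varphi_s/\partial x = 0$. With $\lambda_t = -\partial g_{t-1}/\partial y(x_{t-1}^{\hat\varphi}, x_t^{\hat\varphi})$, this is precisely the hypothesis (\ref{TCEE}) up to the sign.

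Finally I would verify the remaining hypotheses of Theorem \ref{SufThmfhms}: Assumption \ref{AMPms} specializes, under $f_t(x,u)=u$, to exactly the series $\rho_t^\tau$ appearing in Assumption \ref{AEE}, so the assumed Assumption \ref{AEE} on $\hat\varphi$ supplies Assumption \ref{AMPms}; and Assumption \ref{SuffConEE} is Assumption \ref{SuffConfhms} rewritten with $g_t(x_t^\varphi,x_{t+1}^\varphi)$ in place of $g_t(x_t^\varphi,\varphi_t)$, which coincides here because $\varphi_t(x_t^\varphi)=x_{t+1}^\varphi$. With all hypotheses of Theorem \ref{SufThmfhms} verified, that theorem delivers the optimality of $\hat\varphi$ for the model (\ref{eulerpi})-(\ref{eulerdyn2}). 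The only genuinely delicate point is a bookkeeping check that the sign conventions and the identification $\varphi_t(x_t^\varphi)=x_{t+1}^\varphi$ line up consistently in the reduction; the rest is a direct substitution.
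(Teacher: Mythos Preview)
Your proposal is correct and follows essentially the same route as the paper: both arguments reduce to Theorem \ref{SufThmfhms} by constructing the adjoint sequence from the Euler equation and checking that (\ref{MPXms})--(\ref{TCms}) hold in the special case $f_t(x,u)=u$. The only cosmetic difference is that the paper defines $\lambda_t:=\partial g_t/\partial x(x_t^{\hat\varphi},x_{t+1}^{\hat\varphi})$ and then derives $\lambda_{t+1}=-\partial g_t/\partial y$ from (\ref{MPEE}), whereas you start from $\lambda_t:=-\partial g_{t-1}/\partial y(x_{t-1}^{\hat\varphi},x_t^{\hat\varphi})$; by (\ref{MPEE}) these two definitions coincide, so the verifications of (\ref{MPXms}), (\ref{MPYms}), and (\ref{TCms}) are identical. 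Your write-up is in fact more explicit than the paper's in spelling out the transversality reduction and the identification of Assumptions \ref{AEE} and \ref{SuffConEE} with Assumptions \ref{AMPms} and \ref{SuffConfhms}.
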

	\begin{proof}
		Define, for each $t\in\natu$,  $\displaystyle\lambda_t:=\frac{\partial g_t}{\partial x}(x_t^{\hat{^\varphi}},x_{t+1}^{\hat{^\varphi}})$. From (\ref{MPEE}), $\displaystyle\lambda_{t+1} =-\frac{\partial g_t}{\partial y}(x_t^{\hat\varphi},x_{t+1}^{\hat\varphi})$ for $t\in\nat$. Thus,
		for each $t\in\natu$,
		\begin{align*}
		\lambda_{t}=\frac{\partial g_t}{\partial x}(x_t^{\hat\varphi},x_{t+1}^{\hat\varphi}).
		\end{align*}
		In addition, for each $t\in\nat$
		\begin{align*}
		0=\frac{\partial g_t}{\partial y}(x_t^{\hat\varphi},x_{t+1}^{\hat\varphi})+\lambda_{t+1}.
		\end{align*}
		Therefore, Theorem \ref{SufThmfhms} yields the result.
	\end{proof}

	\begin{examp}[An economic growth model]
		Consider the following problem concerning an optimal growth model known as de \textit{Ak model}; see Section 2.3.2  of \cite{DavidEE}. Let $\beta\in(0,1)$, $\theta<0$ and $a>1$ such that $(a\beta)^{\frac{1}{\theta-1}}>1$. The performance index is 
		\begin{align*}
		\sum_{t=0}^\infty \frac{\beta^t}{\theta}(ax_{t}-x_{t+1})^{\theta},
		\end{align*}
		subject to $x_{t+1}\in[0,ax_{t}]$, for all $t\in\natu$.
		
		Our control model in this subsection has the following components:
		
		\begin{itemize}
			\item state space $X_t\equiv X:=(0,\infty)$ with control constraint sets $U_t(x)=[0,ax]$ for all $x\in X$;
			\item return functions $g_t:X_{t}\times X_{t+1}\to \rz$ with $g_t(x,u)=\frac{\beta^t}{\theta} (ax-u)^{\theta}$. 
		\end{itemize} Hence, the Euler equation
		\begin{align*}
		-(ax_{t-1}^{\hat\varphi}-x_t^{\hat\varphi})^{\theta-1}+\beta a(ax_{t}^{\hat\varphi}-x_{t+1}^{\hat\varphi})^{\theta-1}=0, \hspace{0.7cm} t=1,2,\dots,
		\end{align*}
		can be expressed as the difference equation
		\begin{align}\label{EEexamp}
		bx_{t+1}^{\hat\varphi}-(1+ab)x_t^{\hat \varphi}+ax_{t-1}^{\hat\varphi}=0,
		\end{align}
		with $b:=(a\beta)^{\frac{1}{\theta-1}}>1$. Considering a linear solution $\hat\varphi_t(x)=\alpha x$, and substituting  $x_{t-1}^{\hat\varphi}$ by $\alpha^{-1}x_t^{\hat\varphi}$ in (\ref{EEexamp}); we obtain $\alpha=b^{-1}$.
		To prove that Assumption \ref{AEE} holds, let $\tau\in\nat$ and consider $\rho_t^\tau:\rz\to\rz$ as in the assumption. Take $\mathcal O_\tau=(\eta,\eta')$ as a small neighborhood of $b^{-1}x_\tau^{\hat\varphi}$ properly contained in $[0,ax_\tau^{\hat\varphi}]$. Then we have
		\begin{align*}
		\left|\rho_{t}^{\tau}(u)\right|&=\left|(a-b^{-1})\beta^t[(a-b^{-1})x_t^{\hat\varphi^\tau(u)}]^{\theta-1}b^{-t+\tau+1}\right|\\
		&<\left|(a-b^{-1})^{\theta}\beta^t [x_t^{\hat\varphi^\tau(u)}]^{\theta-1}\right|\\
		&=\left|(a-b^{-1})^{\theta}\beta^t [b^{\tau-t}u]^{\theta-1}\right|\\
		&<\left|(a-b^{-1})^{\theta}\beta^t\eta^{\theta-1}\right|.
		\end{align*}
		Thus, by the Weierstrass M-test, $\displaystyle\sum_{t=\tau+1}^{\infty}\rho^\tau_t$ converges uniformly on $\mathcal O_\tau$.
	\end{examp}

	\section{Dynamic Games}
	
	In this section, we consider noncooperative dynamic games with $N$ players and state space $X\subset\rzn$.

	Assume that the state dynamics is given by 
	\begin{align}\label{DGdynamic}
	x_{t+1}=f_t(x_t,u_t^{1},\dots,u_t^N),
	\end{align}
	where, for each $j=1,\dots, N$, $u_t^j$  is chosen by player $j$ in the control set $U_t^j\subset\mathbb R^{m_j}$. We suppose that player $j$ wants to ``maximize" a performance index (also known as a payoff function) of the form
	\begin{align}\label{DGPI}
	\sum_{t=0}^{\infty}g_t^j(x_t,u_t^1,\dots,u_t^N),
	\end{align}
	subject to (\ref{DGdynamic}) and a given initial state $x_0$.
	
	We denote by $\Psi^j(x_0)$ the set of plans, or strategies, of player $j$, that is,  $\psi^j=(u_0^j,u_1^j,\dots)$ with $u_t^j\in U_t^j$ for all $t\in\nat$. The set of so-called multistrategies $\psi=(\psi^1,\dots,\psi^N)$ is denoted by $\Psi:=\Psi^1(x_0)\times\dots\times\Psi^N(x_0)$ .

	Given a multistrategy  $\psi=(\psi^1,\dots,\psi^N)\in\Psi$, we denote by $\left\lbrace x_t^\psi \right\rbrace $ the sequence induced by $\psi$ in (\ref{DGdynamic}), i.e., 
	\begin{align*}
	&x_0^\psi=x_0\\
	&x_{t+1}^\psi=f_t(x_t^\psi,u_t^{1},\dots,u_t^N).
	\end{align*}

	We can specify a dynamic game in reduced form as 
	\begin{align}\label{DGtuple}
	\left(\Psi,\left\lbrace f_t\right\rbrace ,\left\lbrace g_t^j\hspace{0.1cm}|\hspace{0.1cm} j=1,\dots,N\right\rbrace\right),
	\end{align} 
	with components as above. 
	
	The following assumption is supposed to hold throughout the remainder of the section.

	\begin{assu}\label{DGWellPosed}
		The triplet in (\ref{DGtuple}) satisfies the following for each $x_0\in X_0$ and each $j=1,\dots,N$:
		\begin{enumerate}
			\item[(a)] the set $\Psi^j(x_0)$ is nonempty,
			
			\item[(b)] for each $\psi\in\Psi$, 
			\begin{align*}
			\sum_{t=0}^{\infty}g_t^j(x_t,u_t^1,\dots,u_t^N)<\infty;
			\end{align*}
			\item[(c)] there exist a $\psi\in\Psi$ such that 
			\begin{align*}
			\sum_{t=0}^\infty g_t^j(x_t^\psi,u_t^1,\dots,u_t^N)>-\infty;
			\end{align*}
			\item[(d)] for each $t\in\nat$, $f_t$ and $g^j_t$ are differentiable in the interior of $X\times U_t^1\times\dots\times U_t^N$.
		\end{enumerate}
	\end{assu}
	For $x_0\in X_0$ and $j=1,\dots N$, define $v^j:\Psi\to\rz$ by
	\begin{align}\label{DGPIv}
	v^j(\psi)=\sum_{t=0}^\infty g_t^j(x_t^\psi,u_t^1,\dots,u_t^N).
	\end{align}
	Assumption \ref{DGWellPosed}(a)-(b) ensures that the function $v^j$ is well defined.
	
	We say that $\hat{\psi}=(\hat\psi^1,\dots,\hat\psi^N)\in\Psi$ is a Nash equilibrium if, for each player $j=1,\dots,N$, 
	\begin{align*}
	v^j(\hat{\psi})\ge v^j(\hat\psi^1,\dots,\hat\psi^{j-1},\psi^j,\hat\psi^{j+1},\dots,\hat\psi^N)\hspace{0.3cm}\forall_{\psi^j\in\Psi^j(x_0)}.
	\end{align*}

	We want to use Theorem \ref{MP} to characterize Nash equilibria (NE). To that end we consider the following  assumption.

	\begin{assu}\label{AMPDG}
		Consider $\hat{\psi}=(\hat\psi^1,\dots,\hat\psi^N)\in\Psi^1(x_0)\times\dots\times\Psi^N(x_0)$. For each  $\tau\in\nat$ and $j=1,\dots, N$, define the sequence of functions $\rho^{\tau,j}_t: U_\tau\to\rzn$ as $$\rho^{\tau,j}_t(u)=\frac{\partial g_t^j}{\partial x}( x_t^{{\hat\psi^{\tau,j}(u)}}, \hat u_t^1,\dots, \hat u_t^N)\prod_{s=\tau+1}^{t-1}\frac{\partial f_s}{\partial x}(x_s^{{\hat\psi^{\tau,j}(u)}}, \hat u_s^1,\dots, \hat u_s^N),$$ 
		where $\hat\psi^{\tau,j}(u)=(\hat\psi^1,\dots,\hat \psi^{j-1},\hat\psi_j^\tau(u),\hat\psi^{j+1},\dots,\hat\psi^N)$ and  $\hat\psi_j^\tau(u)=(\hat u_0^j,\dots,\hat u_{\tau-1}^j,u,\hat u_{\tau+1}^j,\dots)$. 
		Given $\tau\in\nat$ and $j=1,\dots, N$, we suppose that there exists an open neighborhood $\mathcal O_\tau^j\subset U_\tau^j$ of $\hat u_\tau^j$ such that $\sum_{t=\tau+1}^{\infty}\rho^{\tau,j}_{t}$ converges uniformly on $\mathcal O_\tau^j$. 
		
	\end{assu}
	
	The next theorem follows from Theorem \ref{MP}.
	
	\begin{thm}\label{NEMP}
		Let $\hat \psi\in\Psi$ be such that Assumption \ref{AMPDG} holds. If $\hat\psi$ is a Nash equilibrium, then, for each $j=1,\dots, N$, there exists a sequence $\left\lbrace \lambda^j_t\right\rbrace_{t=1}^\infty$ in $\rzn$ such that
		\begin{enumerate}
			\item[(a)] For all $t\in\natu$,
			\begin{align}\label{MPXDG}
			\frac{\partial g^j_t}{\partial x}(x_t^{\hat\psi},\hat u_t^1,\dots,\hat u_t^N)+\lambda_{t+1}^j\frac{\partial f_t}{\partial x}(x_t^{\hat\psi},\hat u_t^1,\dots,\hat u_t^N)=\lambda^j_t,
			\end{align}
			\item[(b)] For all $t\in\nat$,
			\begin{align}\label{MPYDG}
			\frac{\partial g_t^j}{\partial y_j}(x_t^{\hat\psi},\hat u_t^1,\dots,\hat u_t^N)+\lambda^j_{t+1}\frac{\partial f_t}{\partial y_j}(x_t^{\hat\psi},\hat u_t^1,\dots,\hat u_t^N)=0,
			\end{align}
			\item[(c)] For all $h\in\natu$,
			\begin{align}\label{TCDG}
			\lim_{t\to\infty}\lambda^j_t \prod_{s=h}^{t-1}\frac{\partial f_s}{\partial x}(x_s^{\hat\psi},\hat u_s^1,\dots,\hat u_s^N)=0.
			\end{align}  
		\end{enumerate}
		Moreover, each $\lambda_t^j$ is given by 
		\begin{align}\label{DGmultipliers}
		\lambda^j_t=\sum_{k=t}^\infty\frac{\partial g^j_k}{\partial x}(x_k^{\hat\psi},\hat u_k^1,\dots,\hat u_k^N)\prod_{s=t}^{k-1}\frac{\partial f_s}{\partial x}(x_s^{\hat\psi},\hat u_s^1,\dots,\hat u_s^N).
		\end{align}
	\end{thm}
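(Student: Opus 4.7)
The plan is to reduce the Nash equilibrium characterization to $N$ separate single-player optimal control problems and then invoke Theorem \ref{MP}. Fix a player $j \in \{1,\dots,N\}$. By definition, $\hat\psi^j$ maximizes the function $\psi^j \mapsto v^j(\hat\psi^1,\dots,\hat\psi^{j-1},\psi^j,\hat\psi^{j+1},\dots,\hat\psi^N)$ over $\Psi^j(x_0)$. This is precisely an OCP of the type studied in Section \ref{MPSec}, with reduced system function
\begin{align*}
\tilde f_t^j(x,u) := f_t(x,\hat u_t^1,\dots,\hat u_t^{j-1},u,\hat u_t^{j+1},\dots,\hat u_t^N),
\end{align*}
and reduced return function
\begin{align*}
\tilde g_t^j(x,u) := g_t^j(x,\hat u_t^1,\dots,\hat u_t^{j-1},u,\hat u_t^{j+1},\dots,\hat u_t^N),
\end{align*}
control sets $U_t^j$, plans $\Psi^j(x_0)$, and initial state $x_0$. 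Note that the trajectory induced by $\hat\psi^j$ in this reduced OCP coincides with $\{x_t^{\hat\psi}\}$.

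Next I would verify that Assumption \ref{AMP} holds for the reduced OCP at the plan $\hat\psi^j$. This is immediate: the sequence $\rho_t^\tau$ of Assumption \ref{AMP} applied to $(\tilde f_t^j,\tilde g_t^j)$ coincides with the sequence $\rho_t^{\tau,j}$ of Assumption \ref{AMPDG}, since $\partial \tilde f_t^j/\partial x = \partial f_t/\partial x$ and $\partial \tilde g_t^j/\partial x = \partial g_t^j/\partial x$ when evaluated at the corresponding perturbed trajectories. Hence the uniform convergence hypothesis carries over verbatim, and Theorem \ref{MP} applies to the reduced OCP, yielding a unique adjoint sequence $\{\lambda_t^j\}_{t=1}^\infty$ satisfying (\ref{MPX})–(\ref{TC}) and the explicit formula (\ref{multipliers}) for $(\tilde f_t^j,\tilde g_t^j)$.

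The final step is to translate these conclusions back into the notation of the game. Since $\partial \tilde f_t^j/\partial x = \partial f_t/\partial x$ and $\partial \tilde g_t^j/\partial x = \partial g_t^j/\partial x$ at $(x_t^{\hat\psi},\hat u_t^1,\dots,\hat u_t^N)$, conditions (a) and (c) of Theorem \ref{MP} become exactly (\ref{MPXDG}) and (\ref{TCDG}), and the formula for $\lambda_t^j$ becomes (\ref{DGmultipliers}). Furthermore, $\partial \tilde f_t^j/\partial u$ and $\partial \tilde g_t^j/\partial u$ are, by construction, the partial derivatives of $f_t$ and $g_t^j$ with respect to the $j$-th player's control block, i.e.\ $\partial f_t/\partial y_j$ and $\partial g_t^j/\partial y_j$; this turns condition (b) of Theorem \ref{MP} into (\ref{MPYDG}). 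Repeating this construction for each $j=1,\dots,N$ concludes the proof.

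The argument involves no new analytic ingredient beyond Theorem \ref{MP}; the only point requiring care is the bookkeeping of partial derivatives, specifically confirming that freezing the other players' controls commutes with differentiation in the state and in the $j$-th block of controls. This is the main obstacle only in a notational sense, and it is handled by the explicit definitions of $\tilde f_t^j$ and $\tilde g_t^j$ above.
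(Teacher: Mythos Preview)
Your proposal is correct and takes essentially the same approach as the paper, which simply states that the theorem ``follows from Theorem~\ref{MP}'' with no further detail. Your reduction---freezing the other players' strategies to obtain a single-player OCP for each $j$, checking that Assumption~\ref{AMPDG} specializes to Assumption~\ref{AMP} for the reduced data $(\tilde f_t^j,\tilde g_t^j)$, and then reading off (\ref{MPXDG})--(\ref{DGmultipliers}) from Theorem~\ref{MP}---is exactly the intended argument, spelled out explicitly.
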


	\begin{assu}
		Let $\hat\psi\in\Psi$. We assume that the game (\ref{DGtuple}) satisfies the following for each $j=1,\dots,N$: 
		\begin{enumerate}\label{SuffConDG}
			\item[(a)]   $\Psi^j(x_0)$ is convex;
			
			\item[(b)]	
			the performance index $v^j$ is concave;
			
			\item[(c)] there exists a sequence of real  numbers $\left\lbrace m_t\right\rbrace $ such that $\sum_{t=0}^\infty m_t$ converges and 
			$
			g_t(x_t^{\hat\psi},\hat u_t^1,\dots, \hat u_{t}^{j-1},u_t^j,\hat u_t^{j+1},\dots, \hat u_t^N)\ge m_t
			$  for all $\psi^j=(u_0^j,u_1^j,\dots)\in\Psi^j(x_0)$.
		\end{enumerate}
	\end{assu}

	Theorem \ref{SufThm} yields the next theorem.
	
	\begin{thm}\label{DGSufThm}
		Let $\hat{\psi}\in\Psi$ be such that Assumption \ref{AMPDG} holds. Suppose that $\hat\psi$ satisfies  (\ref{MPXDG})-(\ref{TCDG}). If Assumption \ref{SuffConDG} holds, then $\hat\psi$ is a Nash Equilibrium.
	\end{thm}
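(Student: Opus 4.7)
The strategy is to reduce the Nash equilibrium condition to $N$ separate single-agent optimal control problems and then invoke the sufficiency result Theorem \ref{SufThm} in each of them. Recall that, by definition, $\hat\psi=(\hat\psi^1,\dots,\hat\psi^N)$ is a Nash equilibrium if and only if, for every $j=1,\dots,N$, the strategy $\hat\psi^j$ maximizes the map
\[
\psi^j\longmapsto v^j(\hat\psi^1,\dots,\hat\psi^{j-1},\psi^j,\hat\psi^{j+1},\dots,\hat\psi^N)
\]
over $\Psi^j(x_0)$. Fix $j$ and freeze the other players' choices at $\hat\psi^{-j}:=(\hat\psi^1,\dots,\hat\psi^{j-1},\hat\psi^{j+1},\dots,\hat\psi^N)$. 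Define the reduced system and reduced reward functions by
\[
\tilde f_t^{\,j}(x,u):=f_t(x,\hat u_t^1,\dots,\hat u_t^{j-1},u,\hat u_t^{j+1},\dots,\hat u_t^N),
\]
\[
\tilde g_t^{\,j}(x,u):=g_t^{\,j}(x,\hat u_t^1,\dots,\hat u_t^{j-1},u,\hat u_t^{j+1},\dots,\hat u_t^N).
\]
This yields a single-player OCP of the form (\ref{dynamic})--(\ref{tuple}) whose plan set is $\Psi^j(x_0)$, whose performance index is the map displayed above, and for which $\hat\psi^j$ is the candidate optimal plan.

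The next step is to translate the hypotheses of the theorem into the hypotheses of Theorem \ref{SufThm} applied to this reduced OCP. Concretely, I would verify the following three items in order. First, Assumption \ref{AMPDG} for player $j$ is literally Assumption \ref{AMP} for the reduced OCP: the sequence $\rho_t^{\tau,j}$ in Assumption \ref{AMPDG} coincides with the sequence $\rho_t^{\tau}$ built from $(\tilde f_t^{\,j},\tilde g_t^{\,j})$ at the plan $\hat\psi^j$, so the required uniform convergence transfers directly. Second, the identities (\ref{MPXDG}) and (\ref{MPYDG}) and the limit (\ref{TCDG}) are precisely (\ref{MPX}), (\ref{MPY}), and (\ref{TC}) written with $\tilde f_t^{\,j}$ and $\tilde g_t^{\,j}$ in place of $f_t$ and $g_t$, because $\partial_x \tilde f_t^{\,j}=\partial_x f_t$, $\partial_y\tilde f_t^{\,j}=\partial_{y_j}f_t$, and analogously for $g_t^j$. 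Third, Assumption \ref{SuffConDG}(a)--(c) gives exactly Assumption \ref{SuffCon}(a)--(c) for the reduced OCP: convexity of $\Psi^j(x_0)$, concavity of the restricted performance index $v^j(\hat\psi^{-j},\cdot)$ as a function of $\psi^j$ alone (concavity in a subset of the variables follows from joint concavity), and the uniform lower bound $\tilde g_t^{\,j}(x_t^{\psi^j},u_t^j)\ge m_t$ required by part (c).

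With these three items in hand, Theorem \ref{SufThm} applies to the reduced OCP and asserts that $\hat\psi^j$ is an optimal plan for it, i.e.\ $v^j(\hat\psi)\ge v^j(\hat\psi^1,\dots,\hat\psi^{j-1},\psi^j,\hat\psi^{j+1},\dots,\hat\psi^N)$ for every $\psi^j\in\Psi^j(x_0)$. Since $j$ was arbitrary, $\hat\psi$ is a Nash equilibrium, which is the desired conclusion.

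There is no real mathematical obstacle here: the proof is essentially a bookkeeping exercise, and the only point that merits a line of care is the concavity transfer. If $\psi\mapsto v^j(\psi)$ is concave on the product set $\Psi$ (as required by Assumption \ref{SuffConDG}(b)), then its restriction to any affine slice obtained by fixing $\hat\psi^{-j}$ is concave on $\Psi^j(x_0)$; this is precisely what is needed to match Assumption \ref{SuffCon}(b) in the reduced OCP. Once this is noted, the argument is a direct application of Theorem \ref{SufThm} $N$ times.
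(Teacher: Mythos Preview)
Your proposal is correct and follows exactly the approach the paper intends: the paper does not give a separate proof but merely says that Theorem~\ref{SufThm} yields the result, and you have spelled out the reduction---freezing $\hat\psi^{-j}$ to obtain a single-player OCP with data $(\tilde f_t^{\,j},\tilde g_t^{\,j})$ and then checking that Assumptions~\ref{AMPDG} and~\ref{SuffConDG} for player $j$ become Assumptions~\ref{AMP} and~\ref{SuffCon} for that reduced problem. The only comment worth making is that your remark on the concavity transfer (restricting a concave $v^j$ to the affine slice $\{\hat\psi^{-j}\}\times\Psi^j(x_0)$) is exactly the detail the paper suppresses, and it is handled correctly.
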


	\begin{examp}
		Consider the following game with linear dynamics
		\begin{align*}
		x_{t+1}=x_t+u_t^1+\cdots+u_t^N,
		\end{align*}
		with $x_0\in\mathbb R$ given, and performance index 
		\begin{align*}
		\sum_{t=0}^\infty\beta^t\frac{1}{2}\left[x_t^2+[u_t^j]^2 \right] 
		\end{align*}
		for each player $j=1,\dots,N$.

		From (\ref{MPXDG})-(\ref{MPYDG}),
		for each $j=1,\dots,N$,
		\begin{align}
		\lambda_t^j=\beta^tx_t^{\hat{\psi}}+\lambda_{t+1}^j \hspace{0.3cm}\forall_{t\in\natu},
		\end{align}
		
		\begin{align}\label{2eqLQDG}
		0=\beta^t \hat u_t^j+\lambda^j_{t+1} \hspace{0.3cm}\forall_{t\in\nat}.
		\end{align}
		First, note that by (\ref{DGmultipliers}), $\lambda_t^j=\lambda_t^1$ for $j=1,\dots,N$. Moreover, by (\ref{2eqLQDG}), $u_t^j=u_t^1$ for $j=1,\dots,N$.
		Proceeding as in Example \ref{LQ}, we find $x_t^{\hat{\psi}}=x_0r^t$, where $r=\min\left\lbrace x|\beta x^2-[1+(1+N)\beta]x+1=0 \right\rbrace $. Thus,
		\begin{align*}
		\hat u_t^j&=\frac{x_{t+1}^{\hat\psi}-x_{t}^{\hat\psi}}{N}\\
		&=\frac{x_0(r-1)r^t}{N}\hspace{0.5cm}\forall_{j=1,\dots,N}
		\end{align*}
		Assumption \ref{AMPDG} can be proved exactly as Assumption \ref{AMP} was proved in Example \ref{LQ}.
	\end{examp}

	\begin{remark}
		Using the theory of Section \ref{Markovmodel}, we can prove an analogous maximum principle for dynamic games in which each player uses Markov strategies.
	\end{remark}

	
	
	
	\nocite{*}
	\bibliography{ref}
	\bibliographystyle{elsarticle-har}

\end{document}